\documentclass[12pt]{amsart}
\usepackage{amsmath}
\usepackage{amsthm}
\usepackage{amsfonts}
\usepackage{amssymb}
\newtheorem{Theorem}{Theorem}[section]
\newtheorem{Proposition}[Theorem]{Proposition}
\newtheorem{Lemma}[Theorem]{Lemma}
\newtheorem{Corollary}[Theorem]{Corollary}
\theoremstyle{definition}
\newtheorem{Definition}{Definition}[section]
\theoremstyle{remark}

\numberwithin{equation}{section}

\newcommand{\Z}{{\mathbb Z}}
\newcommand{\R}{{\mathbb R}}
\newcommand{\C}{{\mathbb C}}

\begin{document}

\title[Reflection coefficients]{Generalized reflection coefficients}

\author{Christian Remling}

\address{Mathematics Department\\
University of Oklahoma\\
Norman, OK 73019}

\email{cremling@math.ou.edu}

\urladdr{www.math.ou.edu/$\sim$cremling}

\date{June 4, 2014}

\thanks{2010 {\it Mathematics Subject Classification.} Primary 34L40 47B36 81Q10}

\keywords{Reflection coefficient, absolutely continuous spectrum}

\thanks{CR's work has been supported
by NSF grant DMS 1200553}
\begin{abstract}
I consider general reflection coefficients for arbitrary one-dimensional whole line
differential or difference operators of order $2$. These reflection coefficients
are semicontinuous functions of the operator:
their absolute value can only go down when limits are taken. This implies a corresponding semicontinuity
result for the absolutely continuous spectrum, which applies to a very large class of maps. In particular, we can consider shift maps
(thus recovering and generalizing a result of Last-Simon) and flows of the Toda and KdV hierarchies (this is new). Finally, I evaluate an
attempt at finding a similar general setup that gives the much stronger conclusion of reflectionless limit operators in more
specialized situations.
\end{abstract}
\maketitle
\section{Introduction}
A \textit{Herglotz function }is a holomorphic map $F:\C^+\to\C^+$ on the upper half plane $\C^+=\{ z\in\C : \textrm{Im}\, z>0\}$.
Given a pair of Herglotz functions $m_{\pm}$, we define generalized \textit{reflection coefficients }as follows:
\begin{equation}
\label{defR}
R_+(z) = \frac{\overline{m_+(z)}+m_-(z)}{m_+(z)+m_-(z)}, \quad\quad
R_-(z) = \frac{m_+(z)+\overline{m_-(z)}}{m_+(z)+m_-(z)}
\end{equation}
While, on a formal level, it is convenient to make Herglotz functions the basic objects, I'm really motivated
by the spectral theory of whole line differential or difference operators.
We thus like to think of $m_{\pm}$ as representing the half line Titchmarsh-Weyl $m$ functions of, for example, a Jacobi matrix
\[
(Ju)_n = a_n u_{n+1} + a_{n-1}u_{n-1} + b_n u_n
\]
on $\ell^2(\Z)$, or perhaps of a Schr\"odinger operator
\[
(Hu)(x) = -u''(x) + V(x) u(x)
\]
on $L^2(\R)$, or of more general operators. This abstract and general framework
provides the natural setting for what I want to do here.

It is in fact \textit{always }possible
to interpret a given pair of Herglotz functions as the half line $m$ functions
of a sufficiently general whole line spectral problem. Namely, the Herglotz functions are in one-to-one correspondence to trace-normed
(that is, $\textrm{tr }H(x)=1$) canonical systems
\[
Ju'(x) = zH(x) u(x)
\]
on the half line $0\le x < \infty$. Here, $J=\left( \begin{smallmatrix} 0 & -1 \\ 1 & 0 \end{smallmatrix}\right)$, and $H(x)\in\R^{2\times 2}$ is positive semidefinite.
See \cite{Win} for this fundamental result, which is based on important earlier work by de~Branges \cite{dB} and Potapov \cite{Pota}.

Thus \textit{pairs }of Herglotz functions can be parametrized by \textit{whole line }trace-normed canonical systems
(one half line for each Herglotz function). In particular, Jacobi and Schr\"odinger equations can be rewritten as canonical systems.
We will often find it convenient to write $H(x)$ (with $x\in\R$) to refer in this way to a pair of Herglotz functions. This is mainly a notational device.

As we will see, the reflection coefficients $R_{\pm}$ have very remarkable properties. Their use was first advertised by Rybkin in \cite{Ryb},
and Breuer-Ryckman-Simon \cite{BRSim} work with closely related general reflection coefficients.
Here, we want to use $R_{\pm}$ to analyze the absolutely continuous spectrum in completely general situations.

We will be especially interested in the set (more precisely, equivalence class of sets, defined up to sets of Lebesgue measure zero)
\[
\Sigma_{ac} = \{x\in\R : \textrm{\rm Im }m_{\pm}(x) > 0 \} .
\]
This is the set where the underlying whole line operator has
absolutely continuous spectrum of multiplicity $2$.

Another important notion in the study of the absolutely continuous spectrum is the following:
\begin{Definition}
We call the canonical system $H(x)$ \textit{reflectionless }on a Borel set $A\subset\R$ of positive Lebesgue measure if
\[
m_+(x) = -\overline{m_-(x)} \:\: \textrm{for almost every }x\in A .
\]
\end{Definition}
If $H(x)$ is reflectionless on $A$, then $A\subset\Sigma_{ac}$. Conversely, however, a general canonical system can not
be expected to be reflectionless on $\Sigma_{ac}$; quite on the contrary, reflectionless operators are very rare. These points are also illuminated
by Proposition \ref{P1.1} below.

Reflectionless operators are important because they can be thought of as the basic building blocks
of arbitrary operators with absolutely continuous spectrum; see \cite{Remac} for more on this.

We can read off both $\Sigma_{ac}$ and the set where $H$ is reflectionless from the reflection coefficients.
This immediately recommends them for further use if one is interested in the absolutely continuous spectrum.
Observe first of all that we may define
$R_{\pm}(z)$ by the formulae given for all $z\in\C^+$. Since Herglotz functions have boundary values
$F(x)\equiv\lim_{y\to 0+} F(x+iy)$ for (Lebesgue-)almost all $x\in\R$, we can also consider the almost everywhere defined functions
$R_{\pm}(x)$. The boundary value of the Herglotz function $m_++m_-$ cannot be equal to zero on a positive measure set,
so there are no problems with the denominators here.

It is also helpful to observe that $|R_+|=|R_-|$, so we can unambiguously write $|R|$ if we are only interested
in the \textit{absolute values }of the reflection coefficients. More generally, we will frequently use the simplified notation
$R$ to refer to an unspecified reflection coefficient ($R_+$ or $R_-$).
\begin{Proposition}
\label{P1.1}
(a) Up to sets of measure zero,
\[
\Sigma_{ac} = \{ x\in\R : |R(x)|< 1 \} .
\]
(b) $H$ is reflectionless precisely on the set where $R(x)=0$.
\end{Proposition}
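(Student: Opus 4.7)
The plan is to reduce both parts to a direct computation with the real and imaginary parts of the boundary values of $m_\pm$. For almost every $x\in\R$, the Herglotz functions $m_\pm$ have finite boundary values, which I will write as $m_\pm(x) = a_\pm + i b_\pm$ with $b_\pm \geq 0$. The set where these boundary values fail to exist, or where the denominator $m_+(x)+m_-(x)$ vanishes, has Lebesgue measure zero (the latter because $m_++m_-$ is itself Herglotz, as noted in the excerpt), so these exceptional sets can be discarded without affecting any almost-everywhere statement.

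For part (a), I would plug these boundary values into the definition of $R_+$ and compute
\[
|R_+(x)|^2 \;=\; \frac{(a_+ + a_-)^2 + (b_- - b_+)^2}{(a_+ + a_-)^2 + (b_+ + b_-)^2}.
\]
Since $b_\pm \geq 0$, the elementary identity $(b_++b_-)^2 - (b_--b_+)^2 = 4b_+ b_-$ shows that numerator $\leq$ denominator always, hence $|R_+(x)| \leq 1$ a.e.; moreover equality in the numerator/denominator comparison is strict precisely when $b_+ b_- > 0$, i.e.\ precisely when $\operatorname{Im} m_+(x) > 0$ and $\operatorname{Im} m_-(x) > 0$. This is exactly the condition $x\in\Sigma_{ac}$. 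Since $|R_-| = |R_+|$ (already noted in the text, and confirmed by the same kind of computation), the characterization in terms of $|R|$ is unambiguous.

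For part (b), the reflectionless condition $m_+(x) = -\overline{m_-(x)}$ is equivalent to $\overline{m_+(x)} + m_-(x) = 0$, which is exactly the vanishing of the numerator of $R_+(x)$, and also equivalent to $m_+(x) + \overline{m_-(x)} = 0$, the vanishing of the numerator of $R_-(x)$. Since the common denominator $m_++m_-$ is nonzero a.e., $R_+(x)=0$ iff $R_-(x)=0$ iff $H$ is reflectionless at $x$, which gives the claim.

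There is essentially no obstacle here: both parts collapse to an arithmetic identity once the boundary values are in hand. The only item demanding care is the measure-zero bookkeeping — namely, that boundary values of $m_\pm$ exist a.e.\ and that $m_++m_-$ has a nonzero boundary value a.e. — but both facts are standard consequences of the Herglotz property and are essentially recorded in the paragraph preceding the Proposition.
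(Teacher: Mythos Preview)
Your proof is correct and follows the same approach as the paper's: the paper's proof is the one-line remark ``This is immediate from the definition of $R$ and the fact that $\textrm{Im }m_{\pm}(x)\ge 0$,'' and your computation simply spells out what ``immediate'' means here.
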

\begin{proof}
This is immediate from the definition of $R$ and the fact that $\textrm{Im }m_{\pm}(x)\ge 0$.
\end{proof}
We could similarly recover $\Sigma^{(\sigma)}_{ac}$, defined as the set where
$\textrm{Im }m_{\sigma}>0$, for $\sigma=+$ or $-$: indeed, $x\in\Sigma^{(\sigma)}_{ac}$ precisely if
$R_{\sigma}(x)\not= 1$. However, we will make no use of these sets here.

The following is the main result of this paper; recall our notational convention to write
$H$ (thought of as the coefficient of a canonical system) to refer to a pair of Herglotz functions $m_{\pm}$.
\begin{Theorem}[Semicontinuity of $R(x)$ and $\Sigma_{ac}$]
\label{Tsemicon}
If $m^{(n)}_{\pm}(z)\to m_{\pm}(z)$ locally uniformly on $z\in\C^+$,
then the reflection coefficients satisfy
\[
\left| R(x; H)\right| \le \limsup_{n\to\infty} \left| R(x; H_n) \right|
\]
for almost every $x\in\R$.

In particular, if $A$ denotes the set where the $\limsup$ is $<1$, then $\Sigma_{ac}(H)\supset A$.
\end{Theorem}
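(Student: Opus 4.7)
\medskip\noindent\textbf{Plan of proof.}
The plan is to introduce the auxiliary function
\[
V(z) \;:=\; \log\bigl(1 - |R(z)|^2\bigr)
\]
on $\C^+$, exploit its superharmonic structure, and transfer interior convergence to a boundary semicontinuity statement. The starting point is the algebraic identity
\[
1 - |R(z)|^2 \;=\; \frac{4\,(\operatorname{Im} m_+(z))\,(\operatorname{Im} m_-(z))}{|m_+(z)+m_-(z)|^2}, \qquad z\in\C^+,
\]
which follows directly from \eqref{defR} and extends to a.e.\ $x\in\R$ by the existence of non-tangential boundary values of Herglotz functions. Taking logarithms,
\[
V \;=\; \log 4 \,+\, \log\operatorname{Im} m_+ \,+\, \log\operatorname{Im} m_- \,-\, 2\log|m_++m_-|.
\]
On $\C^+$ each $\log\operatorname{Im} m_\pm$ is the logarithm of a positive harmonic function and is hence superharmonic, while $\log|m_++m_-|$ is harmonic (the Herglotz function $m_++m_-$ is zero-free on $\C^+$). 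Therefore $V$ is superharmonic on $\C^+$ with $V\le 0$, and passing to $-\log$, the claim $|R(x;H)|\le\limsup_n|R(x;H_n)|$ is equivalent to $V(x)\ge\liminf_n V_n(x)$ for a.e.\ $x\in\R$, with $V_n$ formed from $m_\pm^{(n)}$ in the same way.

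Local uniform convergence $m_\pm^{(n)}\to m_\pm$ on $\C^+$ immediately yields local uniform convergence $V_n\to V$ on $\C^+$ and vague convergence of the Riesz masses $\nu_n := -\Delta V_n/(2\pi)\ge 0$ to $\nu := -\Delta V/(2\pi)$. The Riesz decomposition of a nonpositive superharmonic function on $\C^+$ gives
\[
V_n(z) \;=\; (P_y \ast V_n|_\R)(x) \,+\, \int_{\C^+} G(z,w)\,d\nu_n(w),\qquad z=x+iy,
\]
with $P_y$ the Poisson kernel and $G$ the Green function of $\C^+$, and an analogous formula for $V$. Because $G(\cdot,w)$ vanishes on $\partial\C^+\cup\{\infty\}$, the vague convergence of $\nu_n$ propagates to pointwise convergence of the Green potentials on $\C^+$; subtracting gives
\[
(P_y \ast V_n|_\R)(x) \;\longrightarrow\; (P_y \ast V|_\R)(x) \qquad \text{for every } z=x+iy\in\C^+.
\]

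It remains to upgrade this convergence of Poisson integrals to the pointwise a.e.\ inequality for the boundary values. The bound is automatic where $\limsup_n|R_n(x)|=1$, so the real content is on the sets $A_\delta := \{x:\limsup_n|R_n(x)|\le 1-\delta\}$, $\delta>0$, which exhaust $\{\limsup_n|R_n|<1\}$ as $\delta\to 0$. On $A_\delta$, for all sufficiently large $n$ the values $V_n(x)$ are uniformly bounded below by some $-C_\delta$, which renders the relevant integrals finite. The goal on $A_\delta$ is to combine (i) the non-tangential boundary limits of $V$ and each $V_n$ at Lebesgue points, (ii) the lower semicontinuity of the superharmonic $V_n$, which forces $V_n(x+iy)\ge V_n(x)-\varepsilon$ for $y$ sufficiently small, and (iii) the locally uniform convergence $V_n\to V$ on $\C^+$, in order to rule out the scenario $V(x)<\liminf_n V_n(x)$ on a positive-measure subset of $A_\delta$. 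Part (b) of the theorem is then an immediate consequence: on the set $A$ where the limsup is $<1$, part (a) gives $|R|<1$, so $A\subset\Sigma_{ac}(H)$ by Proposition~\ref{P1.1}(a).

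The hard part is precisely the last step, turning the convergence of Poisson integrals into a pointwise a.e.\ inequality in the right direction. A naive application of Fatou's lemma to the nonnegative functions $-V_n|_\R$ only yields $V\le\limsup V_n$ in an integrated sense---which translates to $|R|\ge\liminf|R_n|$, the wrong direction. Extracting the correct inequality $V\ge\liminf V_n$ a.e.\ requires a more delicate use of the superharmonic structure: controlling the interchange of the limits $n\to\infty$ and $y\to 0+$ at boundary Lebesgue points, and exploiting the uniform lower bound on the $V_n$ that is available on each $A_\delta$ to legitimately apply reverse Fatou-type arguments to the restricted functions.
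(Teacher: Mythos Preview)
Your setup is natural and the decomposition $V=\log 4+\log\operatorname{Im}m_++\log\operatorname{Im}m_--2\log|m_++m_-|$ is essentially the same one the paper uses (there in the form $\ln(R_\sigma-1)=\ln 2ig+\ln\operatorname{Im}m_\sigma$). But what you have written is a plan, not a proof: you yourself flag that ``the hard part is precisely the last step,'' and that step is not carried out. The ideas you sketch for it do not close the gap.

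First, item (ii) is misstated. Lower semicontinuity of a superharmonic function is an interior property; it does not give $V_n(x+iy)\ge V_n(x)-\varepsilon$ at a boundary point $x\in\R$, where $V_n(x)$ is only defined as a non-tangential limit. What is true (via your Riesz decomposition, once it is properly justified) is that $V_n(x+iy)\ge h_n(x+iy)$ with $h_n=P_y*V_n|_\R$, and $h_n(x+iy)\to V_n(x)$ as $y\to 0$ for a.e.\ $x$. The difficulty is that this convergence is not uniform in $n$, so you cannot interchange $n\to\infty$ and $y\to 0$. Second, the claim that vague convergence $\nu_n\to\nu$ yields pointwise convergence of the Green potentials is not automatic: $G(z,\cdot)$ has a logarithmic singularity at $z$ and is not compactly supported, so you would need uniform integrability of $G(z,\cdot)$ against the $\nu_n$, which you have not established. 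Third, a ``reverse Fatou'' argument from the uniform lower bound on $A_\delta$ alone cannot work, because you are trying to prove an inequality that compares boundary values to a \emph{weak} limit, and weak limits of bounded sequences need not satisfy pointwise a.e.\ inequalities in either direction without additional structure.

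The paper resolves exactly this point with a concrete Jensen-type estimate (its Lemma~3.2): for a.e.\ $x\in A$, $\operatorname{Im}m_\sigma(x)\ge\exp P_\sigma(x)$, where $P_\sigma$ is the weak $L^2$ limit of $\ln\operatorname{Im}m_\sigma^{(n)}$ on $A$. The proof uses that $\operatorname{Im}m_\sigma^{(n)}\,dx\le\pi\,d\rho_n$ and the weak-$*$ convergence $\rho_n\to\rho$ of the representing measures, then applies Jensen's inequality on shrinking intervals and Lebesgue differentiation. In your language this is precisely the statement that the weak limit of $\log\operatorname{Im}m_\sigma^{(n)}(x)$ is $\le\log\operatorname{Im}m_\sigma(x)$ a.e.\ on $A_\delta$; combined with the weak $L^2$ convergence of $\log|m_+^{(n)}+m_-^{(n)}|$ (the paper's Theorem~2.1, which handles the harmonic part), it gives $V(x)\ge\liminf_n V_n(x)$ a.e. So the missing ingredient in your argument is not a soft consequence of superharmonicity; it is this measure-theoretic Jensen step, without which the boundary inequality cannot be extracted.
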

It is obvious from the definition that the reflection coefficients are continuous off the real line, that is,
$R(z;H_n)\to R(z;H)$ locally uniformly on $z\in\C^+$. It is also clear that $R(x;H)$ for $x\in\R$ will not,
in general, be \textit{continuous }(rather
than just semicontinuous) in the second argument. For example, an arbitrary pair $m_{\pm}$ of Herglotz functions
can be approximated by Herglotz functions with purely singular measures, and then $R(x;H_n)=1$
for almost every $x\in\R$ and all $n$. Or, as will become clear in a moment (see Proposition \ref{P1.2} below),
we can have $|R(x; H_n)|\ge c>0$ and $R(x;H)=0$ on a positive measure set $x\in A$ even if the approximating
problems have purely absolutely continuous spectra. In fact, the results on reflectionless limit points \cite{Remac}
say that something of this sort will always happen in certain situations.

Theorem \ref{Tsemicon} deals with completely general sequences of operators $H_n$, and then the $\limsup$ could easily
equal one and we are not getting a non-trivial statement. It is therefore worth pointing out that in the situations
of greatest interest, $|R(x; H_n)|$ will actually be constant along the whole sequence, and then the
$\limsup$ just equals the reflection coefficient $|R(x; H_1)|$ of the first operator of the sequence.

Let me elaborate on this in more detail.
We are often interested in sequences that are generated by iterating a map on the associated operators.
Of particular interest are the shift map and flows of an integrable hierarchy such as the Toda (Jacobi operators)
or KdV (Schr\"odinger operators) hierarchies. These maps are induced by an associated transfer matrix,
and this will imply that they preserve $|R|$.

Let me state this more formally.
Let $T(z)$ be an entire, $SL(2,\C)$ valued
function with $T(x)\in SL(2,\R)$ for $x\in\R$.
If $m_{\pm}^{(1)}$ are the half line $m$ functions of the
canonical system $H_1(x)$, define two new functions $m_{\pm}^{(2)}$ on $\C^+$
as follows:
\begin{equation}
\label{defphi}
m_-^{(2)}(z) = T(z)m_-^{(1)}(z) , \quad\quad m_+^{(2)}(z) = IT(z)I\,m_+^{(1)}(z)
\end{equation}
Here $I= \left( \begin{smallmatrix} 1 & 0\\ 0 & -1 \end{smallmatrix} \right)$, and a matrix acts on a complex number
as a linear fractional transformation, that is,
\[
\begin{pmatrix} a & b \\ c & d \end{pmatrix} z = \frac{az+b}{cz+d} .
\]
(So $I$ implements multiplication by $-1$.)
The new functions $m_{\pm}^{(2)}$ are not guaranteed to be Herglotz functions, but if they are, we obtain a transformation
$m_{\pm}^{(1)}\mapsto m_{\pm}^{(2)}$.

\begin{Definition}
\label{D1.1}
We say that a transformation of canonical systems $H_1\mapsto H_2$ is of type TM (as in transfer matrix) if
the $m$ functions of the new system $H_2$ are obtained as in \eqref{defphi}, for some $T(z)$ as above.
\end{Definition}
This definition may not, at first sight, look especially natural. The next two Propositions will, I hope, show that it is useful here.
\begin{Proposition}
\label{P1.3}
The following maps are of type TM:
\begin{enumerate}
\item shift maps;
\item (time $t$ maps of) flows from the Toda hierarchy;
\item (time $t$ maps of) flows from the KdV hierarchy
\end{enumerate}
\end{Proposition}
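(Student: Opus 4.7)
The plan is to exhibit, in each of the three cases, an entire $SL(2,\C)$-valued function $T(z)$ implementing \eqref{defphi}; the $I$-twist between the formulas for $m_+^{(2)}$ and $m_-^{(2)}$ will come out automatically from the opposite sign convention built into the definitions of the left and right half-line $m$-functions, so in each case it is enough to compute how the Weyl solutions' initial data at a base point transform and then read off $T(z)$ (up to the $I$-conjugation).

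For (1), I would take $T(z)$ to be (essentially) the canonical-system transfer matrix $T_{\mathrm{sh}}(s,z)$ from $0$ to $s$, defined as the solution of $J\partial_x T_{\mathrm{sh}} = z H T_{\mathrm{sh}}$ with $T_{\mathrm{sh}}(0,z)=\mathrm{id}$. Because $J$ is symplectic, $T_{\mathrm{sh}}(s,\cdot)$ is entire in $z$ and $SL(2,\R)$-valued on $\R$. The Weyl solutions of $H_1$ at $\pm\infty$ are, tautologically, the Weyl solutions of the shifted system $H_2(x):=H_1(x+s)$ at $\pm\infty$, and comparing their initial data at $x=0$ and at $x=s$ produces the required linear fractional transformations of $m_\pm^{(1)}$ into $m_\pm^{(2)}$.

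For (2), (3), I would invoke the Lax-pair representation $\dot L=[P_k,L]$ of the $k$-th flow of the Toda (resp.\ KdV) hierarchy, where $P_k$ is a skew-symmetric difference (resp.\ differential) operator of finite order. Compatibility of $Lu=zu$ with $\dot u=P_k u$ means that a solution $u(\cdot,0)$ of $L(0)u=zu$ evolves into a solution $u(\cdot,t)$ of $L(t)u=zu$. Restricting $P_k$ to the two-dimensional solution space at fixed $z$ yields a linear matrix ODE
\[
\dot U(t,z) = A_k(t,z)\,U(t,z),\qquad U(0,z)=\mathrm{id},
\]
whose coefficient $A_k(t,z)$ is polynomial in $z$ and real on $\R$. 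Hence $T(z):=U(t,z)$ is entire in $z$, in $SL(2,\C)$ (since $\mathrm{tr}\,A_k=0$, by skew-symmetry of $P_k$), and $SL(2,\R)$-valued on $\R$. Granted that evolved Weyl solutions are Weyl solutions of the new operator, the LFT action of $U(t,z)$ on the old $m$-functions produces the new ones exactly as in \eqref{defphi}.

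\textbf{The hard part} will be precisely this ``granted'' clause in (2), (3): the claim that the $L^2$ solutions at $\pm\infty$ for $L(0)$ evolve under $\dot u = P_k u$ into $L^2$ solutions at $\pm\infty$ for $L(t)$. I expect this to rest on the isospectrality of the hierarchy flows (so that the limit-point/limit-circle classification at each end is preserved) together with the locality of $P_k$ (so that the behaviors at $+\infty$ and $-\infty$ can be decoupled). A secondary technical issue is that the hierarchy flow needs to actually be defined on the class of operators considered; this may well require an approximation step in which one first works with a dense subclass where the flow is known to be well-defined and then passes to the limit using the continuity properties of $R$ already established.
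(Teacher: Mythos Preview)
Your approach is essentially the same as the paper's: for shifts, use the transfer matrix of the canonical system; for Toda/KdV, use the $SL(2,\C)$/$SL(2,\R)$ cocycle coming from the zero-curvature (Lax) representation of the hierarchy, which updates the Weyl solutions and hence acts on $m_\pm$ as in \eqref{defphi}. The paper's own argument is explicitly labeled a \emph{sketch} and handles what you call ``the hard part'' (that evolved Weyl solutions remain Weyl solutions, and that the flow is well defined on the relevant class) simply by deferring to the literature \cite{GeHol1,GeHol2,Teschl}; so you need not supply an independent proof of these points, and in particular the approximation step you contemplate is not required here.
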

By a \textit{shift map, }we of course mean a map that sends $H(x)$ to $H(x-a)$, for some $a\in\R$.
This includes (as restrictions) shifts $V(x)\mapsto V(x-a)$ of Schr\"odinger operators and shifts
$(a_n,b_n)\mapsto (a_{n-k},b_{n-k})$ of Jacobi matrices.

The list from Proposition \ref{P1.3} is, of course, not exhaustive. For example, we do not go beyond
polynomial dependence on $z$ here in the sense that either $T(z)$ itself is a polynomial (Jacobi shift)
or is obtained by solving a differential equation with coefficients that are polynomials in $z$.
\begin{proof}[Sketch of proof]
This follows from the fact that the $m$ functions can be represented by formulae of the type
$m_{\pm}(z) =\pm u_2(0,z)/u_1(0,z)$, where $u$ solves the canonical system (or other associated
equations, in cases (2), (3)) $Ju'=zHu$. If we now shift $H$ by $a$, then $u$ is updated by the transfer matrix
$T(a,z)$, which indeed leads to \eqref{defphi}.

We obtain the same conclusion for
Toda and KdV type flows because these also have associated $SL(2,\C)$/$SL(2,\R)$ cocycles that update solutions.
See, for example, \cite{GeHol1,GeHol2,Teschl} for a discussion of this.
\end{proof}
\begin{Proposition}
\label{P1.2}
If $H_1\mapsto H_2$ is of type TM, then
\[
\left| R(x;H_2)\right| = \left| R(x; H_1) \right|
\]
for almost all $x\in\R$.
\end{Proposition}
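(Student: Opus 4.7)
The plan is to do a direct calculation with the linear fractional transformations \eqref{defphi}. Write
$T(x) = \left( \begin{smallmatrix} a & b \\ c & d \end{smallmatrix}\right)$
with $a,b,c,d \in \R$ and $ad-bc=1$, so that
$ITI = \left(\begin{smallmatrix} a & -b\\ -c & d \end{smallmatrix}\right)$.
Since $|R_+|=|R_-|$, it suffices to work with $R_+$. Using \eqref{defphi},
\[
m_-^{(2)} = \frac{a m_- + b}{c m_- + d}, \qquad m_+^{(2)} = \frac{a m_+ - b}{-c m_+ + d},
\]
where I write $m_\pm$ for $m_\pm^{(1)}$.

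The key step is to compute the sum and its twisted analog. Putting everything over a common denominator, the cross terms involving $m_+ m_-$ cancel, and the condition $ad-bc=1$ then gives
\[
m_+^{(2)} + m_-^{(2)} = \frac{m_+ + m_-}{(d - c m_+)(d + c m_-)} .
\]
Exactly the same manipulation, with $m_+$ replaced by $\overline{m_+}$ in the numerator of $m_+^{(2)}$ (which is legitimate a.e.\ on $\R$ because $a,b,c,d$ are real there), yields
\[
\overline{m_+^{(2)}} + m_-^{(2)} = \frac{\overline{m_+} + m_-}{(d - c\overline{m_+})(d + c m_-)} .
\]
Dividing, the factor $d + c m_-$ cancels and one obtains
\[
R_+(x; H_2) = R_+(x; H_1) \cdot \frac{d - c m_+(x)}{d - c\overline{m_+(x)}}.
\]
Since $c$ and $d$ are real, the last factor is a quotient of conjugate complex numbers and has absolute value $1$, so $|R_+(x;H_2)| = |R_+(x;H_1)|$.

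There is no real obstacle; the proof is a bookkeeping calculation. The only thing to monitor is that every step is valid almost everywhere on $\R$: the boundary values $m_\pm(x)$ exist a.e., the denominators $m_++m_-$ and $(d-cm_+)(d+cm_-)$ vanish at most on null sets (the first by the Herglotz property already used in the introduction, the second because $m_\pm(x)$ lies off the real axis a.e.\ on $\Sigma_{ac}$ and outside $\Sigma_{ac}$ the identity $|R_+|=1$ is trivial), and the reality of $T$ on $\R$ must be invoked at the end to conclude that the correction factor has modulus one.
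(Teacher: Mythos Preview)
Your proof is correct and follows exactly the same route as the paper's: a direct computation showing that $R_+$ is multiplied by a unimodular factor, and your formula $R_+(x;H_2)=R_+(x;H_1)\cdot\dfrac{d-cm_+}{d-c\overline{m_+}}$ is precisely the paper's $R_+(x;H_2)=\dfrac{cm-d}{c\overline{m}-d}\,R_+(x;H_1)$ after multiplying numerator and denominator by $-1$. The only cosmetic point is that your justification for the non-vanishing of $(d-cm_+)(d+cm_-)$ is more complicated than needed: each factor is (a constant times) a Herglotz function, hence has nonzero boundary values a.e., with no need to split into $\Sigma_{ac}$ and its complement.
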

\begin{proof}
Since $T(z)$ is assumed to be continuous, \eqref{defphi} will also hold for almost all $z=x\in\R$. Then we
can obtain $R(x;H_2)$ directly from $R(x; H_1)$ by using formula \eqref{defR} for $z=x$ and updating
$m_{\pm}(x)$ according to \eqref{defphi}.
We can then verify by a direct computation that this will not change $|R(x)|$. More explicitly, if $T(x) =\left( \begin{smallmatrix}
a & b \\ c & d \end{smallmatrix} \right)$, then we find that
\[
R_+(x; H_2) = \frac{c m -d}{c\overline{m}-d}\, R_+(x; H_1) , \quad m = m_+^{(1)}(x) .
\]
\end{proof}
In the next Corollary, I'll focus on the
version for Jacobi operators, but analogous results hold in other settings, with the same proof.
\begin{Corollary}
\label{C1.1}
Suppose that $J_n\to J$ in the weak operator topology (equivalently, assume pointwise convergence of the
coefficients $a, b$). If $J_{n+1}$ is obtained from $J_n$ by:
\begin{enumerate}
\item a shift, or
\item a (time $t$ map of a) flow from the Toda hierarchy,
\end{enumerate}
then $\Sigma_{ac}(J)\supset \Sigma_{ac}(J_1)$.
\end{Corollary}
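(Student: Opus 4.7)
The plan is to chain together the three main ingredients already available: Proposition \ref{P1.3} (shifts and Toda flows are of type TM), Proposition \ref{P1.2} ($|R|$ is preserved by TM maps), and Theorem \ref{Tsemicon} (semicontinuity of $|R|$). The characterization $\Sigma_{ac} = \{|R(x)| < 1\}$ from Proposition \ref{P1.1} will then translate the pointwise bound on $|R|$ into the desired inclusion.

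First I would reduce weak operator convergence $J_n \to J$ to the hypothesis of Theorem \ref{Tsemicon}, namely locally uniform convergence $m^{(n)}_\pm(z) \to m_\pm(z)$ on $\C^+$. For Jacobi matrices this is standard: pointwise convergence of the coefficients $a_n, b_n$ (equivalently, weak operator convergence, since these are bounded) implies that the truncated half line problems converge, and the half line $m$-functions, being defined via resolvents, converge pointwise on $\C^+$. Since $m$-functions are Herglotz and hence locally uniformly bounded on compact subsets of $\C^+$, a normal families/Vitali argument upgrades pointwise to locally uniform convergence.

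Next, by Proposition \ref{P1.3}, each passage $J_n \mapsto J_{n+1}$ is of type TM, so iterating Proposition \ref{P1.2} yields
\[
|R(x; J_n)| = |R(x; J_1)| \quad \textrm{for almost every } x \in \R \textrm{ and every } n \ge 1.
\]
Combining this with Theorem \ref{Tsemicon} applied to the sequence $H_n$ corresponding to $J_n$, we conclude
\[
|R(x; J)| \le \limsup_{n\to\infty} |R(x; J_n)| = |R(x; J_1)|
\]
for almost every $x$. Finally, Proposition \ref{P1.1}(a) identifies $\Sigma_{ac}$ with $\{x : |R(x)| < 1\}$, so on the set where $|R(x; J_1)| < 1$ we also have $|R(x; J)| < 1$; this is precisely $\Sigma_{ac}(J) \supset \Sigma_{ac}(J_1)$ up to measure zero.

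The main obstacle, such as it is, is the first step: one must verify that weak operator convergence of the whole line operators $J_n$ really does imply locally uniform convergence of the half line $m$-functions. For Jacobi matrices this is essentially automatic from resolvent convergence plus normality of the Herglotz class, but some care is needed to phrase everything in terms of the canonical system framework in which Theorem \ref{Tsemicon} is stated. Once that is in place, the remaining steps are a straightforward assembly of the preceding Propositions.
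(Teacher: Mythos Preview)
Your proposal is correct and mirrors the paper's own argument exactly: invoke Proposition~\ref{P1.3} to see the maps are of type TM, Proposition~\ref{P1.2} to conclude $|R(x;J_n)|$ is constant in $n$, the standard equivalence between pointwise coefficient convergence and locally uniform $m$-function convergence, and then Theorem~\ref{Tsemicon} (whose second clause already contains the $\Sigma_{ac}$ inclusion, so the final appeal to Proposition~\ref{P1.1}(a) is not even strictly necessary). The paper's proof is simply a terser version of what you wrote.
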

\begin{proof}
These transformations are of type TM by Proposition \ref{P1.3}.
By Proposition \ref{P1.2}, $|R(x; J_n)|$ is constant along the whole sequence, so the result is immediate from Theorem \ref{Tsemicon}
if we also recall that pointwise convergence of the coefficients of a Jacobi matrix
is equivalent to the locally uniform convergence of the $m$ functions.
\end{proof}

The pure shift version of the Corollary was obtained earlier by Last-Simon \cite{LS}, and this is now also
an immediate consequence of the fact that limit points under the shift map are reflectionless on
$\Sigma_{ac}(J_1)$ (see \cite{Remac} for this); the result for Toda flows is new. Here we can in particular
consider $\omega$~limit points $J=\lim\phi_{t_n}(J_1)$ under a fixed Toda flow $\phi_t$; of course, much
more general limits are possible, too.

If we only allow shifts and consistently shift in one direction only (say $(a_n,b_n)\mapsto (a_{n-k},b_{n-k})$ with $k\ge 0$),
then we have the much stronger statement available that all limit points are reflectionless
on $\Sigma_{ac}(J_1)$ \cite[Theorem 1.4]{Remac}. Such a statement cannot possibly hold in the
general setup of Theorem \ref{Tsemicon} or Corollary \ref{C1.1}; for example, we could have that $J_n=J_1$ for all $n\ge 1$.
We must impose an irreversibility condition on the evolution to have any right to expect such a result. It does not seem
ridiculous, however, to hope that if $J=\lim \phi_{t_n}(J_1)$ with $t_n\to\infty$ and $\phi$ a Toda type flow,
then $J$ will be reflectionless on $\Sigma_{ac}$. Kotani \cite{KotKdV} has such a result
for the KdV flow with special initial conditions. In this work, Kotani runs a localized (in energy) version of the proof from
\cite{Remac}, making use of the associated cocycle of the KdV flow.

Inspired by this and Corollary \ref{C1.1}, it is now tempting to ask what class of evolutions of this type lets one
run the original proof and obtain reflectionless limit points. I'll give in to this temptation in the final section of
this paper. The answer we obtain here will be satisfying and disappointing at the same time:
we essentially recover the class of shift maps, but now in the generality of canonical systems.

Theorem \ref{Tsemicon} is proved in Section 3, and Section 2 discusses
a criterion for convergence of Herglotz functions that will be a crucial ingredient to this proof.
\section{Convergence of Herglotz functions}
Our proof of Theorem \ref{Tsemicon} will be based on the following criterion for convergence of Herglotz functions.
Recall again that the boundary value $F(x)$ ($x\in\R$) of a Herglotz
function cannot be equal to zero on a positive measure set, so we can
certainly define $\ln F(x)$ almost everywhere. We will always use the value with $0\le \textrm{Im}\,\ln F(x)\le\pi$.

In fact, we will be working with (complex) logarithms a lot in this section and the next, and much ink can be saved
if at this point already we agree once and for all that
\begin{equation}
\label{convlog}
0\le \textrm{\rm Im}\, \ln w < 2\pi
\end{equation}
is the value we want whenever a logarithm makes an appearance somewhere.
\begin{Theorem}
\label{THergl}
Let $F_n, F$ be Herglotz functions, and fix $R>0$. Then $\ln F_n(x), \ln F(x) \in L^2(-R,R)$.
Moreover, if $F_n(z)\to F(z)$ locally uniformly on $z\in\C^+$, then
\[
\ln F_n(x)\to \ln F(x) \:\: \textrm{\rm weakly in } L^2(-R,R) .
\]
\end{Theorem}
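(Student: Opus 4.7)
The plan is to reduce the statement to a weak-$*$ convergence property of the imaginary-part densities of $\ln F_n$, via the exponential Herglotz representation. Under convention \eqref{convlog}, $\ln$ is continuous on $\C^+$ and maps it into the strip $\{0<\textrm{Im}\,w<\pi\}$, so $\ln F$ is itself a Herglotz function with bounded imaginary part. Writing $\xi(t)=\pi^{-1}\textrm{Im}\,\ln F(t)\in[0,1]$, the exponential Herglotz representation gives
\begin{equation*}
\ln F(z)=c+\int_\R\left(\frac{1}{t-z}-\frac{t}{t^2+1}\right)\xi(t)\,dt,\qquad c=\textrm{Re}\,\ln F(i),
\end{equation*}
with an analogous formula for each $\ln F_n$ (data $c_n,\xi_n$). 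Splitting $\xi=\xi\chi_{\{|t|\le 2R\}}+\xi\chi_{\{|t|>2R\}}$, the near piece contributes (essentially) a Hilbert transform of a bounded, compactly supported function, hence lies in $L^2(\R)$ with norm $\le C(R)$; on $(-R,R)$ the far piece is uniformly bounded and smooth because $\frac{1}{t-x}-\frac{t}{t^2+1}=\frac{1+tx}{(t-x)(t^2+1)}=O(1/t^2)$ uniformly in $x\in(-R,R)$ once $|t|>2R$. Combined with $\pi\xi(x)\in[0,\pi]$ for the imaginary part, this shows $\ln F\in L^2(-R,R)$, and since the $c_n,\xi_n$ are uniformly controlled the same decomposition delivers a uniform $L^2(-R,R)$ bound on the sequence.

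Local uniform convergence $F_n\to F$ upgrades, by continuity of $\ln$, to $\ln F_n\to\ln F$ locally uniformly on $\C^+$; in particular $c_n\to c$, and taking imaginary parts
\begin{equation*}
\int_\R P_y(x-t)\,\xi_n(t)\,dt\;\longrightarrow\;\int_\R P_y(x-t)\,\xi(t)\,dt\qquad \text{for every }(x,y)\in\C^+.
\end{equation*}
The $\xi_n$ sit in the unit ball of $L^\infty(\R)=(L^1(\R))^*$; since $L^1(\R)$ is separable this ball is sequentially weak-$*$ compact, and any weak-$*$ limit point shares the Poisson integrals of $\xi$ and thus equals $\xi$ almost everywhere. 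Therefore the full sequence $\xi_n\to\xi$ in the weak-$*$ topology of $L^\infty(\R)$.

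To finish, I would verify weak $L^2(-R,R)$ convergence on the dense subspace $C_c^\infty(-R,R)$. Given $\phi\in C_c^\infty(-R,R)$, Fubini applied to the boundary-value form of the representation yields
\begin{equation*}
\int_\R\phi(x)\,\ln F_n(x)\,dx=c_n\!\int\phi+\int_\R\xi_n(t)\,\Phi(t)\,dt,
\end{equation*}
where
\begin{equation*}
\Phi(t)=\textrm{p.v.}\!\int\frac{\phi(x)}{t-x}\,dx+i\pi\phi(t)-\frac{t}{t^2+1}\!\int\phi(x)\,dx.
\end{equation*}
The decisive point is $\Phi\in L^1(\R)$: from $\int\phi(x)/(t-x)\,dx=t^{-1}\!\int\phi+t^{-2}\!\int x\phi+O(t^{-3})$ as $|t|\to\infty$, the $t^{-1}\!\int\phi$ term cancels against $\frac{t}{t^2+1}\!\int\phi$, leaving $\Phi(t)=O(1/t^2)$ at infinity (and $\Phi$ is smooth on $\R$). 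The weak-$*$ convergence of the densities then gives $\int\xi_n\Phi\to\int\xi\Phi$, and combined with $c_n\to c$ we obtain $\int\phi\,\ln F_n\,dx\to\int\phi\,\ln F\,dx$. The uniform $L^2(-R,R)$ bound, together with density of $C_c^\infty(-R,R)$, then promotes this to weak convergence in $L^2(-R,R)$.

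The main technical hurdle I anticipate is the careful bookkeeping in justifying Fubini and extracting the $i\pi\phi(t)$ residue from the boundary distribution $1/(t-x-i0)$, along with the $O(1/t^2)$ cancellation that places $\Phi$ in $L^1$. That cancellation is precisely the mechanism by which weak-$*$ convergence of the densities is transmitted to weak $L^2$ convergence of their logarithms.
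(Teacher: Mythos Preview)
Your argument is correct and follows the same overall strategy as the paper: pass to the exponential Herglotz representation with Krein density $\xi\in[0,1]$, split near/far to obtain the $L^2(-R,R)$ bound via $L^2$-boundedness of the Hilbert transform, and then dualize the Hilbert transform to convert weak-$*$ convergence of the $\xi_n$ into weak $L^2$ convergence of $\ln F_n$.

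The one substantive difference is in how the dualization is packaged. The paper first localizes to $A=(-R-1,R+1)$, so that $\chi_A\xi_n\in L^2(\R)$, and then invokes the $L^2$ adjoint identity $\langle g,H(\chi_A\xi_n)\rangle=-\langle H(\chi_B g),\xi_n\rangle_{L^2(A)}$ directly for an arbitrary test function $g\in L^2(-R,R)$; weak $L^2(A)$ convergence of $\xi_n$ then finishes the job in one line. You instead pair the full $\xi_n\in L^\infty(\R)$ against a function $\Phi\in L^1(\R)$, which obliges you to verify the $O(t^{-2})$ cancellation between $\textrm{p.v.}\!\int\phi(x)/(t-x)\,dx$ and $\frac{t}{t^2+1}\!\int\phi$, to restrict first to smooth $\phi$, and then to upgrade via the uniform $L^2$ bound and density. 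The paper's route is slightly shorter because it sidesteps both of these auxiliary steps; your version has the minor compensating advantage of making the global weak-$*$ convergence $\xi_n\to\xi$ in $L^\infty(\R)$ explicit, and of isolating cleanly the single ``dual kernel'' $\Phi$ against which all the $\xi_n$ are tested.
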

The converse is also true and follows automatically from this, by compactness. More precisely,
if $F_n,F$ are Herglotz functions and $\ln F_n\to\ln F$ weakly in $L^2(A)$ for some bounded $A\subset\R$
of positive Lebesgue measure, then $F_n(z)\to F(z)$ locally uniformly on $\C^+$. Indeed, if this were
false, then $F_n(z)\to G(z)$ on a subsequence for some $G$ not identically equal to $F$, by normal families.
Since Herglotz functions are determined by their boundary values on a positive measure set, this
contradicts the Theorem. (To tell the truth, we really need a slightly more general version here because in this
argument, we could have $G\equiv a\in\R_{\infty}$.)
\begin{proof}
Any Herglotz function $F$ has a holomorphic logarithm $\ln F(z)$, which is a Herglotz function itself (recall \eqref{convlog}!).
Since $\textrm{Im}\, \ln F <\pi$, the representing measure of $\ln F$ is purely absolutely continuous; its density, up to a factor of $\pi$,
is called the \textit{Krein function }of $F$:
\[
\xi(x) = \frac{1}{\pi}\lim_{y\to 0+} \textrm{Im }\ln F(x+iy) .
\]
So we have that $0\le\xi\le 1$.

The Herglotz representation of $\ln F$ then reads
\begin{align}
\label{lnF}
\ln F(z) & = C + \int_{-\infty}^{\infty} \left( \frac{1}{t-z} - \frac{t}{t^2+1}\right)\xi(t)\, dt\\
\nonumber
& = C + \int_{-\infty}^{\infty} \frac{1+tz}{t-z}\, \zeta(t)\, dt ;
\end{align}
here we have introduced $\zeta(t)=\xi(t)/(t^2+1)$.

It is well known that locally uniform convergence
of Herglotz functions implies (in fact, is essentially equivalent to) weak $*$ convergence of the associated measures. See,
for example, \cite[Sect.\ 2]{Remac} for a more detailed discussion.
If now $F_n\to F$ locally uniformly, then also $\ln F_n(z)\to\ln F(z)$,
so we obtain that $\zeta_n\, dt\to \zeta\, dt$ in weak $*$ sense, as Borel measures on the one-point compactification
$\R_{\infty}$ of $\R$. The absence of a term linear in $z$ in the Herglotz representation of $\ln F$ implies
that our measures don't give weight to the added point $\infty$, so it is enough
to extend the integrations over $\R$. We then have that
\[
\int_{-\infty}^{\infty}  f(t)\zeta_n(t)\, dt \to \int_{-\infty}^{\infty} f(t)\zeta(t)\, dt
\]
for all test function $f$ that have a continuous extension to $\R_{\infty}$.

Our assignment is to analyze the sequence of functions
\[
\ln F_n(x)\equiv \lim_{y\to 0+}\ln F_n(x+iy) ,
\]
as $n\to\infty$.
We will do this by looking separately at the various contributions from \eqref{lnF}.

Since $C=\ln |F(i)|$, it is certainly clear that $C_n\to C$. Next, the weak $*$ convergence $\zeta_n\, dt\to\zeta\, dt$ together
with the uniform bound on the $\zeta_n$ give that
\[
\int_{|t|>R+1} \frac{1+tx}{t-x}\zeta_n(t)\, dt - \int_{|t|>R+1} \frac{1+tx}{t-x}\zeta(t)\, dt \to 0 ,
\]
uniformly on $|x|\le R$. So let's now focus on the integrals from the right-hand side of \eqref{lnF}, cut off at $|t|=R+1$. Then both summands
from the first line of \eqref{lnF}
become integrable separately, and
\[
\int_{-R-1}^{R+1} \frac{t}{t^2+1}\xi_n(t)\, dt \to \int_{-R-1}^{R+1} \frac{t}{t^2+1}\xi(t)\, dt ,
\]
by the weak $*$ convergence again. It remains to analyze
\[
f_n(x) \equiv \lim_{y\to 0+} \int_{-R-1}^{R+1} \frac{1}{t-x-iy}\, \xi_n(t)\, dt ,
\]
for $|x|<R$ and as $n\to\infty$. It is clear that $\textrm{Im }f_n = \pi\xi_n$ converges to $\textrm{Im }f=\pi\xi$ weakly on
$L^2(-R,R)$: this follows from the weak $*$ convergence $\zeta_n\, dx\to\zeta\, dx$ and the fact that $0\le \xi_n\le 1$.

The real part of $f_n$ can alternatively be computed as a Hilbert transform:
\begin{equation}
\label{2.1}
\textrm{\rm Re}\; f_n(x) = \lim_{y\to 0+} \int_{\{ t\in\R : |t-x|>y\} } \chi_A(t) \frac{\xi_n(t)}{t-x}\, dt \equiv (H(\chi_A\xi_n))(x)
\end{equation}
This holds for almost all $x\in (-R,R)$, and we have abbreviated $A=(-R-1,R+1)$. The Hilbert transform defines
a bounded operator on $L^2(\R)$, and it satisfies $H^*=-H$. This shows, first of all, that indeed $\ln F\in L^2(-R,R)$
for any Herglotz function $F$: since $\chi_A\xi\in L^2(\R)$, we have that $\textrm{Re}\, f\in L^2(-R,R)$ by \eqref{2.1}, and the other contributions
from \eqref{lnF} are bounded on $x\in (-R,R)$. Next, the desired weak convergence in $L^2(-R,R)$ also follows: write $B=(-R,R)$
for notational convenience, and let
$g\in L^2(B)$ be a test function. Then
\begin{align*}
\langle g, \textrm{Re}\, f_n \rangle_{L^2(B)} & = \langle \chi_B g, H(\chi_A\xi_n) \rangle_{L^2(\R)} = - \langle H(\chi_B g), \xi_n\rangle_{L^2(A)}\\
& \to - \langle H(\chi_B g), \xi \rangle_{L^2(A)} = \langle g, \textrm{Re}\, f \rangle_{L^2(B)} .
\end{align*}
Here, the convergence follows from the weak convergence $\xi_n\to\xi$ in $L^2(A)$ that was observed earlier, and the last equality is established
by reversing the first two steps.
\end{proof}
\section{Proof of Theorem \ref{Tsemicon}}
If $C<1$ and $L>0$ are given and we define
\[
B=\{ x\in [-L,L] :\limsup_{n\to\infty} |R(x;H_n)|< C \} ,
\]
then we can find an $A\subset B$ of almost full measure inside $B$ and $N\ge 1$,
such that $|R(x; H_n)|\le C$ for all $x\in A$ and $n\ge N$.
It thus suffices to show that $|R(x; H)|\le C$ almost everywhere on $A$ for such an $A$.

We consider the functions
\[
L^{(n)}_{\sigma}(x) = \ln \left( R_{\sigma}(x; H_n) - 1\right) ,
\]
for $x\in A$ (and $n\ge N$, $\sigma=\pm$).
The argument of the logarithm lies in the left half plane;
so $L^{(n)}$ has imaginary part in $(\pi/2, 3\pi/2)$ by \eqref{convlog}.
Since $R$ is bounded away from $1$, the $L^{(n)}$ are uniformly bounded on $A$. Thus they converge weakly in $L^2(A)$
on a suitable subsequence, which, for notational convenience, we assume to be the original sequence. The weak limits will take
values in the closed convex hull of the union of the ranges of the $L^{(n)}$. We now make use of the following elementary
fact; this will be proved after we have completed the proof of the theorem.
\begin{Lemma}
\label{L3.1}
Let $C<1$. Then $\{ \ln (z-1) : |z|\le C \}$ is a convex subset of $\C$.
\end{Lemma}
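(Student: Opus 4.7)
\medskip\noindent
\textbf{Proof proposal.}
The approach is to reduce the claim to showing that the $\ln$-image of a particular disk is convex, and then verify this by a tangent-rotation computation on the boundary of that disk. The affine map $z\mapsto w=z-1$ sends $\{|z|\le C\}$ bijectively onto the closed disk $D=\{w\in\C:|w+1|\le C\}$, so it suffices to show that $\ln D$ is convex. Because $C<1$, the disk $D$ lies in the open left half plane and is bounded away from $0$; with the branch convention \eqref{convlog}, $\ln$ is therefore holomorphic and injective on a neighborhood of $D$, and carries $D$ biholomorphically onto a compact simply connected set $\Omega$ whose boundary is the Jordan curve
\[
\gamma(t)=\ln(-1+Ce^{it}),\qquad t\in[0,2\pi].
\]
By the Hopf Umlaufsatz, $\Omega$ will be convex as soon as $\arg\gamma'(t)$ is strictly monotone in $t$.

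From $\gamma'(t)=iCe^{it}/(-1+Ce^{it})$, together with the identity $\frac{d}{dt}\arg z(t)=\textrm{Im}(z'(t)/z(t))$ applied to $z(t)=-1+Ce^{it}$, a short direct calculation yields
\[
\frac{d}{dt}\arg\gamma'(t)=1-\frac{C(C-\cos t)}{1-2C\cos t+C^2}=\frac{1-C\cos t}{1-2C\cos t+C^2}.
\]
The denominator is just $|-1+Ce^{it}|^2>0$, and the numerator satisfies $1-C\cos t\ge 1-C>0$ because $C<1$, so this derivative is strictly positive on all of $[0,2\pi]$. Hence $\arg\gamma'$ is strictly increasing with total variation $2\pi$, so $\gamma$ is a convex Jordan curve, $\Omega$ is convex, and the lemma follows.

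The computation itself is routine; the only point requiring a bit of attention -- and thus the main ``obstacle'' -- is the branch bookkeeping for $\ln$. This is benign here: since $D$ avoids the branch cut associated with \eqref{convlog} (the positive real axis), continuous choices of $\arg w$ and of $\ln w$ along $\partial D$ exist and close up after one traversal, so the tangent-rotation criterion applies without incident.
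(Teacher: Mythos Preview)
Your argument is correct. The computation of $\frac{d}{dt}\arg\gamma'(t)$ is accurate, the positivity is immediate from $C<1$, and the appeal to the tangent--rotation criterion (monotone tangent direction along a simple closed $C^1$ curve implies convexity of the enclosed region) is legitimate here because $\ln$ is a biholomorphism on a neighborhood of $D$ and $D$ stays well away from the branch cut associated with \eqref{convlog}.

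The paper takes a genuinely different route. Instead of working with the boundary curve, it writes the image set in Cartesian form as
\[
\{\,x+iy:\ 1-C\le e^x\le 1+C,\ \varphi(x)\le y\le 2\pi-\varphi(x)\,\},
\]
where $\varphi(x)\in(\pi/2,3\pi/2)$ is the first angle at which the ray $te^{i\varphi(x)}$ meets the circle $\{z-1:|z|=C\}$ at $t=e^x$. Convexity is then reduced to showing that $\psi(x)=\pi-\varphi(x)$ is concave; using $\cos\psi(x)=\tfrac{1}{2}\bigl(e^x+(1-C^2)e^{-x}\bigr)$ one gets $(\cos\psi)''=\cos\psi$, hence $-\psi''\sin\psi=(1+\psi'^2)\cos\psi$, which forces $\psi''\le 0$ since $0\le\psi<\pi/2$. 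Thus the paper uses the explicit horizontal symmetry $y\mapsto 2\pi-y$ to reduce to a one-variable concavity check, with no need for Jordan--curve or Umlaufsatz machinery. Your approach, by contrast, is coordinate-free and in one stroke yields strict convexity of the boundary; it also makes the role of the hypothesis $C<1$ very transparent (it enters only through $1-C\cos t>0$). Either proof is perfectly adequate for the use made of the lemma.
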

So we can now introduce two new functions $r_{\sigma}(x)$ on $x\in A$ to describe the weak limits, as follows:
\begin{equation}
\label{3.1}
\ln \left( R_{\sigma}(x; H_n) - 1\right) \xrightarrow{w} \ln \left( r_{\sigma}(x) - 1\right)
\end{equation}
By the Lemma, these may be required to take values in $|r|\le C$.

From the definition \eqref{defR}, we have that
\begin{equation}
\label{3.31}
R_{\sigma}-1 = -\frac{2i\,\textrm{\rm Im}\, m_{\sigma}}{m_++m_-} = 2i g\, \textrm{\rm Im}\, m_{\sigma} ;
\end{equation}
Here we have abbreviated $g=-1/(m_++m_-)$. Note that $g$ is a Herglotz function, too.
We would now like to take logarithms on both sides, but to be able to do this, we also need to make sure
that $\textrm{Im}\, m>0$ or, equivalently, $R\not = 1$. This is of course clear for $R=R(x;H_n)$, from the definition
of the set $A$, but we will also want to use these formulae for $R=R(x;H)$. This issue will be addressed in a moment.

Let's first use \eqref{3.31} for $R=R(x;H_n)$. As planned, take logarithms to obtain that
\begin{equation}
\label{3.3}
\ln \left( R_{\sigma}(x;H_n)-1\right) = \ln 2ig_n(x) + \ln\textrm{Im}\, m_{\sigma}^{(n)}(x) .
\end{equation}
The hypothesis of Theorem \ref{Tsemicon}
implies that also $g_n(z)\to g(z)$ locally uniformly. Thus $\ln 2ig_n(x)\xrightarrow{w} \ln 2ig(x)$ weakly in $L^2(A)$ by
Theorem \ref{THergl}. This shows that the weak limit
\begin{equation}
\label{defP}
P_{\sigma}(x) \equiv \textrm{\rm w--}\!\!\lim_{n\to\infty} \ln\textrm{Im}\, m^{(n)}_{\sigma}(x) ;
\end{equation}
exists, at least if we take it along the subsequence that makes \eqref{3.1} happen. We obtain the following limiting version of \eqref{3.3}:
\begin{equation}
\label{3.32}
\ln \left(r_{\sigma}(x)-1 \right) = \ln 2ig(x) + P_{\sigma}(x)
\end{equation}

We have the following fundamental estimate on $P_{\sigma}$.
\begin{Lemma}
\label{L3.2}
For almost every $x\in A$, we have that
\[
\textrm{\rm Im}\, m_{\sigma}(x) \ge e^{P_{\sigma}(x)} .
\]
\end{Lemma}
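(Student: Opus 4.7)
The plan is to apply Jensen's inequality to the concave logarithm, then pass to limits in two complementary ways: on one side using the weak $L^2$ convergence \eqref{defP}, and on the other using the weak-$*$ convergence of the spectral (Herglotz) measures $\rho^{(n)}_\sigma\to \rho_\sigma$, which is automatic from $m^{(n)}_\sigma\to m_\sigma$ locally uniformly on $\C^+$. A final localization by Lebesgue differentiation then turns the resulting integrated inequality into the pointwise conclusion.

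In detail, for any measurable $B\subset A$ with $0<|B|<\infty$, Jensen's inequality applied on the probability space $(B, dx/|B|)$ yields
\[
\frac{1}{|B|}\int_B \ln \textrm{Im}\, m^{(n)}_{\sigma}(x)\, dx \le \ln\!\left(\frac{1}{|B|}\int_B \textrm{Im}\, m^{(n)}_{\sigma}(x)\, dx\right) .
\]
Because $\chi_B\in L^2(A)$, the weak convergence \eqref{defP} sends the left-hand side to $|B|^{-1}\int_B P_\sigma\, dx$. For the right-hand side, I use that $\textrm{Im}\, m^{(n)}_\sigma(x)\, dx$ is only the absolutely continuous part of $\pi \rho^{(n)}_\sigma$, so $\int_B \textrm{Im}\, m^{(n)}_\sigma\, dx \le \pi \rho^{(n)}_\sigma(\overline{B})$; the Portmanteau theorem, applied to the weakly-$*$ convergent measures on the closed bounded set $\overline{B}$, then gives $\limsup_n \rho^{(n)}_\sigma(\overline{B})\le \rho_\sigma(\overline{B})$. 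Combining these (and noting that the limsup$=0$ case is incompatible with $P_\sigma\in L^2(A)$ being finite, hence harmless),
\[
\frac{1}{|B|}\int_B P_\sigma\, dx \le \ln \frac{\pi \rho_\sigma(\overline{B})}{|B|} .
\]

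Finally I localize. Pick $x_0\in A$ that is simultaneously a density point of $A$, a Lebesgue point of the $L^2$ function $P_\sigma$, and a Lebesgue differentiation point of the Borel measure $\rho_\sigma$, so that $\rho_\sigma([x_0-\epsilon,x_0+\epsilon])/(2\epsilon)\to \pi^{-1}\textrm{Im}\, m_\sigma(x_0)$; almost every $x_0\in A$ qualifies. Setting $B=[x_0-\epsilon,x_0+\epsilon]\cap A$ and sending $\epsilon\to 0^+$, one has $|B|/(2\epsilon)\to 1$, the left side tends to $P_\sigma(x_0)$, and the right side is bounded by $\ln\bigl((2\epsilon/|B|)\cdot \pi \rho_\sigma([x_0-\epsilon,x_0+\epsilon])/(2\epsilon)\bigr)\to \ln \textrm{Im}\, m_\sigma(x_0)$. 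Exponentiating gives the lemma (with the degenerate case $\textrm{Im}\, m_\sigma(x_0)=0$ forcing $P_\sigma(x_0)=-\infty$, which is trivial). I expect the main obstacle to be the mismatch at the second step: weak-$*$ convergence does not separate the absolutely continuous and singular parts of $\rho_\sigma$, so the Portmanteau bound naturally involves $\rho_\sigma(\overline{B})$ rather than $\int_B \textrm{Im}\, m_\sigma\, dx/\pi$. The localization step is precisely the device that discards the singular contribution, because at Lebesgue-almost every point the singular part of $\rho_\sigma$ has zero Lebesgue derivative.
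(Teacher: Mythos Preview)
Your proposal is correct and follows essentially the same route as the paper: Jensen's inequality on a small set, weak $L^2$ convergence for the logarithmic side, weak-$*$ convergence of the Herglotz measures for the other side, and Lebesgue differentiation to localize. The only cosmetic difference is that the paper works with open intervals $I=(a,b)$ having $\rho$-null endpoints (so that $\rho_n(I)\to\rho(I)$ exactly) and then restricts to $I\cap A$ for Jensen, whereas you apply Jensen directly on $B=I\cap A$ and invoke the one-sided Portmanteau bound $\limsup_n\rho_n(\overline{B})\le\rho(\overline{B})$; both devices accomplish the same thing.
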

\begin{proof}[Proof of Lemma \ref{L3.2}]
Drop $\sigma$ in the notation, and let $I=(a,b)$ be an interval with
$\rho(\{a\} )=\rho(\{ b\} )=0$, where $\rho$ is the measure from the Herglotz representation of $m$. Then
\begin{align*}
\frac{\pi\rho(I)}{|I|} & = \lim_{n\to\infty} \frac{\pi\rho_n(I)}{|I|} \ge \limsup_{n\to\infty} \frac{1}{|I|}
\int_{I\cap A} \textrm{\rm Im}\, m_n(x)\, dx\\
& \ge \frac{|I\cap A|}{|I|}\lim_{n\to\infty} \exp \left( \frac{1}{|I\cap A|}\int_{I\cap A} \ln\textrm{\rm Im}\, m_n(x)\, dx \right) \\
& = \frac{|I\cap A|}{|I|} \exp \left( \frac{1}{|I\cap A|}\int_{I\cap A} P(x) \, dx\right) .
\end{align*}
We have used Jensen's inequality to pass to the second line, and the limit in this formula exists by \eqref{defP}.
Now take $I=(x-h,x+h)$, fix $x$ and send $h\to 0+$. For almost every $x\in A$,
the following statements will be true:
\[
\frac{|I\cap A|}{|I|}\to 1, \quad \frac{1}{|I|} \int_{I\cap A} P(t)\, dt \to P(x), \quad \frac{\pi \rho(I)}{|I|} \to \textrm{\rm Im}\, m(x)
\]
Thus, if we also avoid those $h>0$ for which $\rho(\{ x\pm h\})>0$, we obtain that $\textrm{Im}\, m(x)\ge e^{P(x)}$ for these
$x$, as claimed.
\end{proof}

In particular, this implies that $\textrm{Im}\, m_{\sigma}(x)>0$ for almost all $x\in A$, so we may indeed take
logarithms in \eqref{3.31} also for $R=R(x;H)$. Comparison of this formula with \eqref{3.32} then yields
\begin{equation}
\label{3.2}
\ln \left( R_{\sigma}(x; H) - 1\right) = \ln \left( r_{\sigma}(x) - 1\right) + \ln\textrm{\rm Im}\, m_{\sigma}(x) - P_{\sigma}(x) .
\end{equation}
(Note that it would of course have been too naive to expect, based on \eqref{3.1} perhaps, that $R(x;H)=r(x)$.)
We reformulate this one more time: For almost all $x\in A$, we have that
\begin{align}
\label{3.6a}
\textrm{\rm Im}\, \ln \left( R_{\sigma}(x)-1\right) & = \textrm{\rm Im}\, \ln \left( r_{\sigma}(x)-1\right) = \textrm{\rm Im}\,
\ln 2ig(x), \\
\label{3.6b}
\textrm{\rm Re}\, \ln \left( R_{\sigma}(x)-1\right) & \ge \textrm{\rm Re}\, \ln \left( r_{\sigma}(x)-1\right) .
\end{align}
To obtain this, we just take real and imaginary parts of \eqref{3.2}; the inequality in \eqref{3.6b} is obtained from Lemma \ref{L3.2}.

To finish the proof, we now discuss conditions \eqref{3.6a}, \eqref{3.6b} for fixed $x$. We know that $|r_+(x)|, |r_-(x)|\le C$,
and we wish to show that then $|R_{\pm}(x)|\le C$ also. Recall that we of course have that $|R_+|=|R_-|$, but $r_{\pm}$ were obtained
from certain weak limits, so we cannot be sure if these quantities satisfy the same identity.

Write $R_{\pm}-1=\rho_{\pm} e^{i\varphi}$, with $\pi/2<\varphi<3\pi/2$. Note that both numbers indeed have the same
phase $\varphi$ by \eqref{3.6a}. From \eqref{3.6b}, we know that $\rho_{\pm}\ge\rho_0$, where
$\rho_0=|z_0|$ is defined as the absolute value of the smaller of the (usually two) points of intersection of the ray
$te^{i\varphi}$, $t\ge 0$ with the circle $\{ z-1: |z|=C\}$. Also, let $\rho_1=|z_1|\ge \rho_0$ be the absolute value of the other
point of intersection.
We are trying to show that $|R_+|\le C$ or, equivalently, $\rho_+\le\rho_1$.

By elementary geometry, if we compute the points where $\{te^{i\varphi}: t\ge 0\}$ and $\{ z-1 : |z|=C\}$ intersect, we find that
\begin{equation}
\label{3.7}
\rho_j = -\cos\varphi \pm \sqrt{C^2-\sin^2\varphi} \quad\quad (j=0,1);
\end{equation}
more precisely, we obtain $\rho_1$ if we take the plus sign (notice that $-\cos\varphi>0$ for our range
of $\varphi$'s). Notice also that only those values of $\varphi$ for which the ray $te^{i\varphi}$, $t\ge 0$,
intersects the circle $\{ z-1: |z|=C\}$ are compatible with \eqref{3.6a}, \eqref{3.6b}, and this implies that $|\sin\varphi|\le C$.

Next, observe that
\[
-\frac{\overline{R_+}}{R_-} = -\frac{\overline{g}}{g} = \frac{\overline{2ig}}{2ig} = e^{-2i\varphi} .
\]
The last step is by \eqref{3.6a}. It follows that
\[
R_- = -\overline{R_+}e^{2i\varphi} = -\left( 1+\rho_+ e^{-i\varphi} \right) e^{2i\varphi} ,
\]
so $R_- -1=(-\rho_+-2\cos\varphi)e^{i\varphi}$, and this yields
\[
\rho_- = -2\cos\varphi -\rho_+\equiv f(\rho_+) .
\]
From \eqref{3.7}, we see that $f(\rho_1)=\rho_0$,
so we would obtain smaller values $f(\rho_+)<\rho_0$ if we had $\rho_+>\rho_1$, but we also know that $\rho_-\ge\rho_0$, so this
can't happen. We have proved that $\rho_+\le\rho_1$, as desired. \hfill $\square$
\begin{proof}[Proof of Lemma \ref{L3.1}]
The set under consideration can be described as
\[
\{ w=x+iy: 1-C\le e^x \le 1+C, \varphi(x)\le y \le 2\pi - \varphi(x) \} ,
\]
where $\varphi(x)\in (\pi/2,3\pi/2)$ is the angle determined by that ray $te^{i\varphi(x)}$, $t\ge 0$, that intersects the
circle $\{ z-1 :|z|=C\}$ for the first time at $t=e^x$. We want to show that the complementary angle $\psi(x)=\pi-\varphi(x)$
is a concave function of $x$ in the range indicated.

Elementary geometry shows that
\[
\cos\psi(x) = \frac{1}{2} (e^x + Ae^{-x}) ,
\]
with $A=1-C^2>0$. From this, we learn that $(\cos\psi)''=\cos\psi$, so
\[
-\psi'' \sin\psi =(1+\psi'^2)\cos\psi .
\]
Since $0\le\psi<\pi/2$, this implies that $\psi''\le 0$, as desired.
\end{proof}
\section{Reflectionless limits}
As mentioned in the introduction, if we only apply (let's say: left) shifts to (let's say) a Jacobi matrix,
then any limit not only has a $\Sigma_{ac}$ that contains the original $\Sigma_{ac}$ (which is constant
along the approximating sequence, by Propositions \ref{P1.1}, \ref{P1.3}, \ref{P1.2}), but it satisfies the
much stronger condition of being reflectionless on this set \cite[Theorem 1.4]{Remac}.

We now ask the following perhaps somewhat artificial, but not easy to resist question: What is the most
general version of this result that can be obtained by running a sufficiently abstract version of the original proof,
using the setup of the present paper? To make this more concrete, let's define:
\begin{Definition}
\label{D4.1}
A transformation $H_1\mapsto H_2$ of type TM is said to be of type $\mathcal S$ if the associated $T(z)$ can be chosen so that
we have that $T(z)w\in\C^+$ whenever $w,z\in\C^+$. If, in addition, $T(0)=1$, then we say that the transformation is of type $\mathcal S_0$.
\end{Definition}
In other words, in addition to the properties from Definition \ref{D1.1}, we now also demand that the linear fractional
transformation $w\mapsto T(z)w$ is a Herglotz function for all $z\in\C^+$ (equivalently, $z\mapsto T(z)w$ is Herglotz
for all $w\in\C^+$). This condition played a key role in the proof of
\cite[Theorem 1.4]{Remac}, and this is our reason for introducing it here. Abusing notation, we will also write $T\in\mathcal S$.

The relation of the restricted class $\mathcal S_0$ to $\mathcal S$ is easily clarified: if $T(z)\in\mathcal S$ and we set
$A=T(0)\in SL(2,\R)$, then both $T(z)A^{-1}$ and $A^{-1}T(z)$ will be in $\mathcal S_0$.
Conversely, if $A\in SL(2,\R)$ and $T_0(z)\in\mathcal S_0$, then $AT_0, T_0A\in\mathcal S$.
All this is immediate from the definitions if we recall that an $A\in SL(2,\R)$ induces an automorphism of $\C^+$.

We will indeed be able to generalize \cite[Theorem 1.4]{Remac} to this setting. However, nothing essentially new is obtained.
Rather, the transformations of type $\mathcal S_0$ are precisely the left shifts, and if we work with $\mathcal S$, then this
is perhaps followed (or preceded, if we prefer) by
an $SL(2,\R)$ transformation.
\begin{Theorem}
\label{T4.1}
$T(z)\in\mathcal S_0$ precisely if there exist $L\ge 0$ and a trace-normed coefficient $H(x)$ on $[0,L]$ so that
$T(z)=T(L,z)$ where $T(x,z)$ solves
\[
JT'(x,z) = zH(x)T(x,z), \quad T(0,z)=1 .
\]
\end{Theorem}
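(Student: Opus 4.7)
The plan is to prove the two implications separately: the ``if'' direction by direct verification from the defining ODE, and the ``only if'' direction by reduction to the classical inverse spectral theorem for canonical systems.

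For the ``if'' direction, suppose $T(z)=T(L,z)$ arises from $JT'=zHT$, $T(0,z)=1$, with $H$ trace-normed on $[0,L]$. I would verify the clauses of Definitions \ref{D1.1} and \ref{D4.1} in turn. Entireness in $z$ is a standard ODE fact. The $SL(2,\C)$ property follows from Jacobi's formula: $(\det T)' = \det T\cdot\textrm{tr}(T^{-1}T') = -z\det T\cdot\textrm{tr}(JH) = 0$ since $JH$ has zero diagonal, so $\det T\equiv 1$. Reality on $\R$ and $T(L,0)=1$ are immediate. The key point is the $\mathcal S$-property: for fixed $z\in\C^+$ and $v\in\C^2$, set $u(x):=T(x,z)v$, and compute, using $H^T=H$, $J^T=-J$, $J^2=-1$,
\[
\frac{d}{dx}(u^*Ju) = 2i\,\textrm{Im}(z)\,u^*Hu\ge 0 ,
\]
so that $u^*Ju/(2i)$ is non-decreasing in $x$. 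Specializing to $v=\left(\begin{smallmatrix}w\\1\end{smallmatrix}\right)$ with $w\in\C^+$ and using the identity $\textrm{Im}(T(L,z)w)=u(L)^*Ju(L)/(2i|u_2(L)|^2)$, we conclude $T(L,z)w\in\C^+$.

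For the ``only if'' direction, given $T(z)\in\mathcal S_0$, I would first recast the M\"obius condition $T(z)\cdot\C^+\subset\C^+$ as a matrix-level positivity. A short calculation with $v=\left(\begin{smallmatrix}w\\1\end{smallmatrix}\right)$, combined with the $SL(2,\C)$ normalization (which pins down the signature of $T(z)^*JT(z)$), shows that membership in $\mathcal S_0$ is equivalent to the de Branges--type condition
\[
\frac{T(z)^*JT(z)-J}{z-\bar z}\ge 0\quad\textrm{for all }z\in\C^+ ,
\]
holding together with entireness, $\det T\equiv 1$, $T(0)=1$, and reality on $\R$. With this reformulation, the ``only if'' direction becomes precisely the classical inverse spectral theorem of Krein, de Branges, Potapov and Winkler (see \cite{dB,Pota,Win}): any such $T$ is the transfer matrix $T(L,\cdot)$ of a trace-normed canonical system on some $[0,L]$, the length $L$ being given by the exponential type of $T$ (Krein's theorem).

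The main obstacle is the inverse construction of $H$ from $T$ in the ``only if'' direction, which is a genuine inverse spectral problem: a self-contained argument would either cite the Krein--de Branges--Potapov--Winkler theory outright or reprove it by realizing $T(z)$ as a multiplicative Stieltjes integral of infinitesimal factors $1 - zJ\,dH(s)$ (a Potapov-type product expansion). The preliminary translation of the M\"obius $\mathcal S_0$-condition into matrix positivity also deserves attention: preservation of a positive cone is in general weaker than the corresponding Hermitian-form inequality, but the two coincide here thanks to the $SL(2,\C)$ normalization, which forces $T(z)^*JT(z)$ and $J$ to have the same signature.
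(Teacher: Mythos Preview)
Your proposal is correct and follows essentially the same strategy as the paper: reduce to the de~Branges--Potapov characterization of canonical-system transfer matrices, with the one nontrivial step being the equivalence, for $SL(2,\C)$ matrices, of the M\"obius condition $T(z)\C^+\subset\C^+$ and the $J$-inner inequality $-i(T^*JT-J)\ge 0$. The paper isolates this equivalence as a separate Lemma and gives a detailed geometric proof (reducing to an explicit normal form), whereas you sketch it via the signature argument and treat the ``if'' direction by a direct monotonicity computation; these are minor stylistic differences, not a different approach.
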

So this says that the suspense built at the beginning of this section was completely unjustified:
we are not getting beyond the class of shift maps with this attempt at increasing the scope of \cite[Theorem 1.4]{Remac}.
We will obtain a rather general version of this result, though, which we'll state and prove below.

Theorem \ref{T4.1} follows from the characterization of the transfer matrices of canonical systems
that is due to Potapov \cite{Pota} and de~Branges \cite[Theorem 37]{dB}.
Namely, $T(z)=T(L,z)$ for a $T(x,z)$ as above precisely if $T(z)$ is entire with values in
$SL(2,\C)$ and $T(x)\in SL(2,\R)$ if $x\in\R$, $T(0)=1$ and
\begin{equation}
\label{Jin}
-i \left( T^*(z)J T(z) - J \right) \ge 0
\end{equation}
for $z\in\C^+$. If we compare these conditions with the ones from Definition \ref{D4.1}, we see that in order to establish
Theorem \ref{T4.1}, we must show that in the presence of the other conditions just listed, \eqref{Jin} is equivalent to
the Herglotz property from Definition \ref{D4.1}. This is known, but we sketch a proof anyway for the reader's convenience.
The claim boils down to showing the following:
\begin{Lemma}
Let $M\in SL(2,\C)$. Then $w\mapsto Mw$ is a Herglotz function precisely if
\begin{equation}
\label{Min}
-i (M^* JM - J)\ge 0 .
\end{equation}
\end{Lemma}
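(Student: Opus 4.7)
The plan is to translate both conditions to statements about the Hermitian form $v\mapsto -iv^*Jv$. Using the identity $-iu^*Ju=2\,\textrm{Im}(u_1\bar u_2)$ for $u\in\C^2$, a direct computation with $v=(w,1)^T$ gives
\[
-iv^*(M^*JM-J)v \;=\; 2\bigl[\,|cw+d|^2\,\textrm{Im}(Mw)\,-\,\textrm{Im}(w)\,\bigr] .
\]
General $v$ with $v_2\neq 0$ reduces to this by the positive scaling $|v_2|^2$, and $v_2=0$ is handled by continuity. So $-i(M^*JM-J)\ge 0$ is equivalent to the scalar inequality
\[
(*)\qquad |cw+d|^2\,\textrm{Im}(Mw)\;\ge\;\textrm{Im}(w)\qquad\text{for all }w\in\C .
\]
The direction $(*)\Rightarrow Mw$ Herglotz is then immediate: for $w\in\C^+$ with $cw+d\neq 0$ we have $\textrm{Im}(Mw)\ge\textrm{Im}(w)/|cw+d|^2>0$.

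For the converse, I would recast things in matrix language. Set $\mathcal J:=-iJ$, a Hermitian matrix of signature $(1,1)$ with $\det\mathcal J=-1$; since $|\det M|^2=1$, $M^*\mathcal J M$ has the same signature and determinant. The Herglotz hypothesis reads $v^*M^*\mathcal JMv\ge 0$ on $\{v^*\mathcal Jv>0\}$, while the target $-i(M^*JM-J)\ge 0$ is exactly $M^*\mathcal JM\ge\mathcal J$. Hence the Lemma reduces to the following general claim: \emph{for Hermitian $2\times 2$ matrices $A,B$ of signature $(1,1)$ with $\det A=\det B=-1$, the implication $v^*Bv>0\Rightarrow v^*Av\ge 0$ forces $A\ge B$.}

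To prove this claim I would use Sylvester's law of inertia to conjugate $B$ to $\textrm{diag}(1,-1)$; the constraint $\det B=-1$ automatically forces the change-of-basis matrix to satisfy $|\det T|=1$, so determinants are preserved and $A$ is replaced by a Hermitian matrix $\bigl(\begin{smallmatrix}\alpha&\beta\\\bar\beta&\gamma\end{smallmatrix}\bigr)$ with $\alpha\gamma-|\beta|^2=-1$. The hypothesis now reads $\alpha|t|^2+2\,\textrm{Re}(\beta\bar t)+\gamma\ge 0$ for all $t\in\C$ with $|t|>1$ (taking $v=(t,1)^T$). Examining $|t|\to\infty$ rules out $\alpha\le 0$; completing the square then translates the hypothesis into the clean scalar inequality $\alpha\ge|\beta|+1$. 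Verifying that $A-B=\bigl(\begin{smallmatrix}\alpha-1&\beta\\\bar\beta&\gamma+1\end{smallmatrix}\bigr)$ is positive semidefinite---namely $\alpha\ge 1$, $\gamma\ge -1$, and $(\alpha-1)(\gamma+1)\ge|\beta|^2$---then reduces to elementary algebra using $\alpha-1\ge|\beta|$ together with the determinant identity $\alpha\gamma+1=|\beta|^2$. The main obstacle is this final step: it requires a few lines of careful bookkeeping to extract the bound $\alpha\ge|\beta|+1$ from the hypothesis and then to check the three PSD inequalities, but no deeper machinery is needed.
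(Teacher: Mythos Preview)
Your proof is correct and takes a genuinely different route from the paper's. The paper argues the nontrivial direction geometrically: it observes that $M$ sends $\R_\infty$ to a circle in $\overline{\C^+}$, shifts so this circle touches $\R$ at $0$, and pre-composes so that $0\mapsto 0$. This produces a factorization $M=APB$ with $B\in SL(2,\R)$, $A=\bigl(\begin{smallmatrix}1&z\\0&1\end{smallmatrix}\bigr)$ for some $z\in\overline{\C^+}$, and $P$ lower-triangular; the inequality $-iC^*JC\ge -iJ$ is then checked factor by factor, the only nontrivial case being $P$, which is handled by a direct computation of the associated quadratic form. Your argument instead stays purely in the linear algebra of indefinite Hermitian forms: you abstract the statement to ``if $A,B$ are Hermitian of signature $(1,1)$ with $\det A=\det B=-1$ and $v^*Bv>0\Rightarrow v^*Av\ge 0$, then $A\ge B$,'' normalize $B$ to $\mathrm{diag}(1,-1)$ via Sylvester (the determinant constraint forcing $|\det T|=1$, so $\det A$ is preserved), and reduce everything to the scalar inequality $\alpha\ge|\beta|+1$, from which the three PSD conditions for $A-B$ follow by the identity $(\alpha-1)(\gamma+1)-|\beta|^2=\alpha-\gamma-2$ combined with $(\alpha-1)^2\ge|\beta|^2$. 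The paper's factorization has the advantage of making the M\"obius-geometric content visible and connects naturally to the transfer-matrix picture used elsewhere in the section; your approach is cleaner in that it avoids case analysis on the image circle and isolates a self-contained lemma about $(1,1)$-forms that could be reused independently.
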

\begin{proof}
If \eqref{Min} is assumed, then $\textrm{Im}\, Mw>0$ follows at once from the observation that
\begin{equation}
\label{4.2}
\frac{1}{2i} (\overline{w}, 1) J \begin{pmatrix} w \\ 1 \end{pmatrix} = \textrm{\rm Im}\, w .
\end{equation}
So assume now that $Mw$ is Herglotz. Then $M$ maps $\R_{\infty}$ to a circle (possibly a line parallel to $\R_{\infty}$ or even
$\R_{\infty}$ itself, if $M\in SL(2,\R)$) in $\C^+$. We shift so that
the bottom point of this circle touches the real line at $0$ after shifting. In other words, we choose $z\in\C^+\cup \{ 0\}$ so that
$M_1=\left( \begin{smallmatrix} 1 & -z \\ 0 & 1 \end{smallmatrix} \right) M$ is still a Herglotz function, but now $M_1 t=0$
for some $t\in\R_{\infty}$. We also move $t$ to the origin, that is, we introduce
\[
P = \begin{pmatrix} 1 & -z \\ 0 & 1 \end{pmatrix} M\begin{pmatrix} 1 & t \\ 0 & 1 \end{pmatrix} .
\]
This of course assumes that $t\not=\infty$; the case $t=\infty$ is similar and is left to the reader. So we have that
\[
M= \begin{pmatrix} 1 & z \\ 0 & 1 \end{pmatrix} P \begin{pmatrix} 1 & -t \\ 0 & 1 \end{pmatrix} \equiv APB .
\]
It suffices to show that $-iC^*JC\ge -iJ$ for the individual factors $C=A,P,B$. This is obvious for $B$ (with equality,
since $B\in SL(2,\R)$) and $A$ (by a calculation), so we can focus on $P$.

Since $P0=0$, we have that
\[
P = \begin{pmatrix} a & 0 \\ b & a^{-1} \end{pmatrix} ,
\]
with $a\not= 0$. Write $w=x+iy$ and compute
\begin{equation}
\label{4.1}
\frac{1}{2i} (\overline{w}, 1) P^*JP \begin{pmatrix} w \\ 1 \end{pmatrix} = Ay^2 + By + Ax^2 + Cx \equiv F(x,y)
\end{equation}
with
\[
A = \textrm{\rm Im}\, a\overline{b}, \quad B = \textrm{Re}\, a/\overline{a}, \quad C = \textrm{\rm Im}\, a/\overline{a} .
\]
We are interested in the range $x\in\R$, $y\ge 0$. By \eqref{4.2}, the left-hand side of \eqref{4.1} computes a positive multiple of $\textrm{Im}\, Pw$,
so we know that $F(x,y)\ge 0$, and we wish to show that in fact $F(x,y)\ge y$. From the condition that $F\ge 0$, we infer that
$A\ge 0$, $C=0$, so $a/\overline{a}=\pm 1$, that is, either $a=id$ or $a=d$ for some $d\in\R$. The first
case would make $B=-1$, but then $F(0,y)<0$ for small $y>0$, so this is impossible. Hence $a\in\R$ and $B=1$, so
\[
F(x,y) = Ay^2 + y + Ax^2 \ge y ,
\]
as desired.
\end{proof}
\begin{Theorem}
\label{Trefl}
Suppose that $H_n\mapsto H_{n+1}$ is of type $\mathcal S$ and the sequence $H_n$ approaches a
limit $H$ in the sense that $m_{\pm}(z; H_n)\to m_{\pm}(z;H)$
locally uniformly on $\C^+$. Then either:\\
(1) $H$ is reflectionless on $\Sigma_{ac}(H_1)$, or\\
(2) $T_n(z)\cdots T_1(z)\to T(z)$ locally uniformly on $\C$ for some $T\in\mathcal S$ (see comments below).
\end{Theorem}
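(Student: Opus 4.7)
The plan is to mimic the proof of \cite[Theorem 1.4]{Remac} in this abstract setting. First, form the cumulative transfer matrix $S_n(z) := T_n(z) T_{n-1}(z) \cdots T_1(z)$. Since $\mathcal{S}$ is closed under composition of Herglotz self-maps of $\C^+$, we have $S_n \in \mathcal{S}$, and iterating the update rule \eqref{defphi} gives, for every $n$,
\[
m_-^{(n+1)}(z) = S_n(z)\, m_-^{(1)}(z), \qquad m_+^{(n+1)}(z) = (I S_n(z) I)\, m_+^{(1)}(z),
\]
where the left-hand sides converge locally uniformly to $m_{\pm}(z)$ on $\C^+$ by hypothesis. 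Moreover, since $T_i(x) \in SL(2,\R)$ for $x \in \R$, the entries of each $T_i$ are real-entire, so $S_n(\bar z) = \overline{S_n(z)}$ on all of $\C$.

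Next, set up a dichotomy via normal families. Pick any $w_0 \in \C^+$ and consider the Herglotz functions $h_n(z) := S_n(z) w_0$. After passing to a subsequence, $h_n \to h$ locally uniformly on $\C^+$, where $h$ is either Herglotz (values in $\C^+$) or identically a real constant $c \in \R_{\infty}$. If $h$ is Herglotz, the Möbius action of $S_n(z)$ converges nontrivially; combining with the limits $S_n(z) m_-^{(1)}(z) \to m_-(z)$ and $(I S_n I)(z) m_+^{(1)}(z) \to m_+(z)$, we obtain the images of three distinct points in $\C^+$ under each $S_n(z)$, which pins down the Möbius transformation, and then Schwarz reflection plus the $SL(2,\R)$ condition on $\R$ fix the matrix $S_n(z) \in SL(2,\C)$. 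The subsequential matrix limit $T(z)$ is then independent of the subsequence, so the full sequence $S_n$ converges locally uniformly on $\C$ to some $T \in \mathcal{S}$: this is case (2).

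If $h \equiv c \in \R_{\infty}$, then for each $z \in \C^+$ the Möbius map $w \mapsto S_n(z) w$ must collapse uniformly to $c$: since elements of $\mathcal{S}$ send interior points of $\C^+$ to interior points, the single orbit $S_n(z) w_0$ can approach $\R_{\infty}$ only if the entire image disk $S_n(z) \C^+$ shrinks to $\{c\}$. By Schwarz reflection, the same collapse happens on $\C^-$. To finish case (1) we need $m_+(x) = -\overline{m_-(x)}$ a.e.\ on $\Sigma_{ac}(H_1)$. Using $S_n(x) \in SL(2,\R)$, one has
\[
-m_+^{(n+1)}(x) = S_n(x)\bigl(-m_+^{(1)}(x)\bigr), \qquad \overline{m_-^{(n+1)}(x)} = S_n(x)\, \overline{m_-^{(1)}(x)},
\]
and both $-m_+^{(1)}(x)$ and $\overline{m_-^{(1)}(x)}$ lie in $\C^-$. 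If the real-line analogue of the $\C^{\pm}$-collapse applies, both image points go to the same real limit, giving $m_+^{(n+1)}(x) + \overline{m_-^{(n+1)}(x)} \to 0$, i.e.\ $m_+(x) = -\overline{m_-(x)}$.

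The main obstacle is precisely this last step: upgrading the interior collapse on $\C^+$ to a real-line collapse on the positive-measure set $\Sigma_{ac}(H_1)$. Locally uniform Herglotz convergence on $\C^+$ to a real constant does not, a priori, control boundary values on $\R$. I plan to handle this by adapting the weak-$L^2$ machinery from the proof of Theorem \ref{Tsemicon}: apply Theorem \ref{THergl} to $\ln(R_\sigma(x; H_n) - 1)$ and to $\ln \textrm{Im}\, m_\sigma^{(n)}(x)$ on subsets of $\Sigma_{ac}(H_1)$, and exploit that the degenerate scenario forces the Jensen-inequality bound $\textrm{Im}\, m_\sigma(x) \ge e^{P_\sigma(x)}$ of Lemma \ref{L3.2} to be saturated, from which reflectionless-ness should be read off. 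A cleaner alternative is to transcribe the localized proof of \cite[Theorem 1.4]{Remac} essentially verbatim, since its only inputs (the updating cocycle belongs to $\mathcal{S}$, it updates $m$ functions via \eqref{defphi}, and $|R(x; H_n)|$ is preserved along the sequence) are all in force here.
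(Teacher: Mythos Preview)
Your dichotomy is miscalibrated. You split on whether the subsequential limit $h$ of $S_n(z)w_0$ is Herglotz or a real constant, and send the Herglotz branch to case~(2) via a three-point determination of the M\"obius map. But the Herglotz branch also contains case~(1): when the image disks $S_n(z)\C^+$ shrink to points in hyperbolic diameter, \emph{every} orbit $S_n(z)w$ with $w\in\C^+$ converges to the same Herglotz function $m_-(z;H)$ (this is forced by Hurwitz together with the known limit $S_n(z)m_-^{(1)}(z)\to m_-(z)$). In that situation $h(z)\equiv m_-(z)$, so two of your three target points coincide and no M\"obius map is pinned down; the conclusion $S_n\to T$ is simply false there. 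Your argument would thus wrongly place the reflectionless case into case~(2). (Incidentally, $-m_+^{(1)}(z)$ lies in $\C^-$, not $\C^+$, so it is not a third orbit of the Herglotz self-map $w\mapsto S_n(z)w$ in the sense you need.)

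The paper uses a different dichotomy (Theorem~\ref{TWeyl}): whether the hyperbolic diameter $R_n(z)$ of $S_n(z)\C^+$ tends to zero or stays bounded away from zero. The locally uniform statement $R_n(z)\to 0$ is exactly the substitute for Corollary~A.3 of \cite{Remac} that lets the original proof run verbatim; your proposal to ``transcribe \cite{Remac}'' is right in spirit, but you are missing precisely this ingredient, and the weak-$L^2$ alternative you sketch does not supply it. For the non-shrinking case the paper does not attempt a three-point argument at all: it invokes Theorem~\ref{T4.1} to realize each $\mathcal S_0$ factor as the transfer matrix of a canonical system on an interval, and then appeals to de~Branges' limit-point theorem to force the total interval length to be finite, from which $S_n(z)\to T(L,z)$ follows. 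That structural step has no counterpart in your outline.
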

An equivalent way of stating (1) is to say that $R(x;H)=0$ on $\Sigma_{ac}(H_1)$. This is the conclusion we were hoping for;
in case (2), the whole sequence $H_n$ just approaches the result of mapping $H_1$ by a single shift type map.
Actually, a few comments on the precise formulation of part (2) are in order: Observe that if $H_1\mapsto H_2$
is a given transformation of type $\mathcal S$, then the corresponding matrix $T(z)$ is determined up to a sign. Indeed,
if we again write $T=AT_0$, $T_0\in\mathcal S_0$, $A\in SL(2,\R)$, then $A$ as a linear fractional transformation can be
found from \eqref{defphi}. This gives $A$ as a matrix, up to a sign ($A$ and $-A$ generate the same linear fractional transformation).
We can then see from Theorem \ref{T4.1} that (the matrix) $T_0(z)$ is uniquely determined.
Thus the $T_j(z)$ from case (2) are well defined, up to signs, and the claim really is that after suitably choosing
these signs, we will have the asserted convergence of the matrices.

So in case (2),
we obtain that $H(x)$ is $H_1(x-a)$, possibly followed by an $SL(2,\R)$ transformation. In particular $|R(x,H)|=|R(x,H_1)|$.
Obviously, this second case cannot be ruled out in general; for instance, we could be dealing with the sequence
of shifts $H(x)\mapsto H(x-2^{-n})$.

To prove Theorem \ref{Trefl}, we follow the proof of \cite[Theorem 1.4]{Remac} that was given in this reference as closely as
possible. I will not try to summarize the whole argument here. Rather, I'll focus right away on the one place where new ingredients are needed.
Namely, Lemma A.2 and Corollary A.3 of \cite{Remac} were established by a direct calculation that depended on the specific form
of transfer matrices of Jacobi operators. In our more general setting, we will use a function theoretic version of the Weyl theory
of nested disks for $m$ functions of problems with variable endpoints.

Let me introduce some notation.
Let $T_n(z)\in\mathcal S$. We abbreviate
\[
P_n(z) = T_n(z)\cdots T_1(z) .
\]
Observe that $P_n\in\mathcal S$ also. For fixed $z\in\C^+$ and $n\ge 1$, the set $\{ P_n(z)w : w\in\C^+ \}$ is a disk in the upper half plane
(possibly degenerate, that is, of the form $\{x+iy: y>y_0\}$),
and we let $R_n(z)$ be its hyperbolic diameter:
\[
R_n(z) = \sup_{w,w' \in\C^+} \gamma \left( P_n(z)w, P_n(z)w' \right) \in (0,\infty]
\]
As explained, $\gamma$ denotes hyperbolic distance in the upper half plane. $R_n(z)=\infty$ is possible if the
disk $\{ P_n(z)w\}$ touches $\R_{\infty}$ or is a half plane. Since holomorphic self-maps of $\C^+$ (aka Herglotz functions)
decrease hyperbolic distance, $R_n(z)$ is decreasing in $n$ for fixed $z\in\C^+$. Thus
\[
R(z) = \lim_{n\to\infty} R_n(z) \in [0,\infty]
\]
exists for all $z\in\C^+$.

By normal families, given a sequence
of Herglotz functions, we can always pass to a locally uniform limit on a subsequence. In particular, we can do
this for a sequence of the form $P_n(z)w_n$, with $w_n\in\C^+$. Any such limit will either
be a Herglotz function itself, or it will be identically equal to $a$ for some $a\in\R_{\infty}$.
\begin{Theorem}
\label{TWeyl}
Suppose that for some $w_0,z_0\in\C^+$, the sequence $P_n(z_0)w_0$ has a subsequence that converges to a $\zeta\in\C^+$. Then either:\\
(1) $R(z)\equiv 0$ on $z\in\C^+$, or\\
(2) $R(z)>0$ for all $z\in\C^+$.\\
In the first case, $R_n(z)\to 0$ locally uniformly on $\C^+$.
\end{Theorem}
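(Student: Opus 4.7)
My plan is to prove the theorem as a Weyl-type limit-point/limit-circle dichotomy for the hyperbolic disks $D_n(z) = P_n(z)(\C^+)$, following the classical Weyl approach adapted to the abstract $\mathcal S$ setting. First, since each $T_{n+1}(z) \in \mathcal S$ is a holomorphic self-map of $\C^+$, Schwarz--Pick implies it weakly contracts the hyperbolic metric, so $R_n(z)$ is non-increasing in $n$ and $R(z) = \lim R_n(z)$ exists. The hypothesis together with Harnack's inequality (applied to the positive harmonic function $\textrm{Im}\,P_n(z)w_0$) shows the Herglotz family $\{z \mapsto P_n(z)w_0\}$ is locally uniformly bounded, hence normal. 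By a diagonal extraction over a countable dense $\{w_j\} \subset \C^+$, pass to a subsequence along which $P_n(z)w \to \Phi(z,w)$ locally uniformly on $\C^+ \times \C^+$, where for each $z$ the map $w \mapsto \Phi(z,w)$ is either a nondegenerate M\"obius self-map of $\C^+$ or a constant map into $\overline{\C^+}$, and $R(z)>0$ precisely in the nondegenerate case.

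The heart of the argument is to exploit the identity $\det P_n(z) \equiv 1$, which I use via a \emph{$z$-independent} normalization. Set $\lambda_n := \sup_{z \in K} \|P_n(z)\|$ for a fixed compact $K \subset \C^+$ containing $z_0$, and consider $\tilde P_n(z) := P_n(z)/\lambda_n$; these are still entire in $z$, and crucially $\det \tilde P_n(z) = 1/\lambda_n^2$ does not depend on $z$. By Montel's theorem, pass to a further subsequence with $\tilde P_n \to \tilde P$ locally uniformly, so $\det \tilde P(z) \equiv c := \lim 1/\lambda_n^2 \in [0,1]$ is constant in $z$. If $c > 0$, the limit matrix $\tilde P(z)$ is invertible at every $z$, the limit M\"obius $\Phi(z,\cdot)$ is nondegenerate everywhere, and $R(z) > 0$ throughout $\C^+$. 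If $c = 0$, then $\det \tilde P \equiv 0$ while $\tilde P \not\equiv 0$ by the normalization, so $\tilde P(z)$ has rank exactly 1 off a discrete set; on the complement, $\Phi(z,\cdot)$ is constant and $R(z) = 0$.

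The locally uniform convergence claim in case (1) then follows from Dini's theorem, applied to the non-increasing sequence of continuous functions $R_n$ with continuous pointwise limit 0.

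The main obstacle I anticipate is the $c = 0$ case of the dichotomy: I must verify that the possibly nonempty discrete zero set of $\tilde P$ in $\C^+$ does not produce exceptional points where $R(z) > 0$, which would violate the claimed dichotomy. I expect to resolve this either by upper semi-continuity of $R(z) = \inf_n R_n(z)$ (which, combined with $R \equiv 0$ on a dense open set, forces $R = 0$ on its closure), or by shifting the compact $K$ so that the maximizing sequence accumulates at the putative exceptional $z^*$, or by passing the matrix Herglotz inequality $-i(P_n^* J P_n - J) \ge 0$ to the limit and using this constraint to exclude rank-$0$ behavior of $\tilde P$ inside $\C^+$.
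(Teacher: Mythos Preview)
Your determinant approach is different from the paper's, but it has a real gap at the Montel step. Normalizing by $\lambda_n=\sup_K\|P_n\|$ bounds $\tilde P_n$ only on the fixed compact $K$, so Montel yields a limit $\tilde P$ only on $\mathrm{int}\,K$, and the dichotomy $c>0$ versus $c=0$ is tied to that $K$. Enlarging to $K'\supset K$ gives $\lambda_n^{K'}\geq\lambda_n^K$, hence $c_{K'}\leq c_K$, and nothing you have written prevents $c_K>0$ while $c_{K'}=0$. Patching the local dichotomies into a global one on all of $\C^+$ is precisely the content of the theorem, and your argument does not do this. (Your anticipated ``main obstacle,'' the discrete zero set of $\tilde P$, is minor by comparison---and your proposed fix for it goes the wrong way: $R=\inf_n R_n$ is indeed upper semicontinuous, but an upper semicontinuous function that vanishes on a dense open set can still be positive at isolated points.)

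There is a second gap in the $c=0$ branch: a rank-one limit $\tilde P(z)$ means the M\"obius maps $P_{n_j}(z)(\cdot)$ collapse to a constant $\alpha(z)$, but $R(z)=0$ follows only if $\alpha(z)\in\C^+$; two sequences in $\C^+$ converging to the same point of $\R_\infty$ can remain at positive hyperbolic distance. Ruling this out requires the hypothesis of the theorem, which you do not invoke at this stage. The paper avoids both problems by working directly with the Herglotz functions $z\mapsto P_n(z)w_n$ rather than with the matrices: these form a normal family on all of $\C^+$ automatically, Hurwitz's theorem applied to the difference of two subsequential limits gives the global dichotomy in one stroke, and the hypothesis is used exactly once, to exclude degeneration of the common limit to a constant in $\R_\infty$.
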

This can be viewed as a function theoretic version of Weyl's limit point/limit circle alternative. Note, however,
that our disks $\{ P_n(z)w: w\in\C^+\}$ move around and are not nested.
\begin{proof}
I will, as expected, work on the upper half plane in this proof, but our intuition is actually better served
if we imagine our functions taking values in the disk model instead, with its hyperbolic metric.

For $w_n,w'_n\in\C^+$, consider the Herglotz functions $P_n(z)w_n$, $P_n(z)w'_n$.
As explained above, these will approach limits $F(z), G(z)$ along suitable subsequences.
If we also take $w_n\not= w'_n$, then $P_n(z)w_n\not=
P_n(z)w'_n$ for all $z\in\C^+$, so by Hurwitz's Theorem, either $F-G$ is zero free or $F\equiv G$.

Now suppose that
$R(z_0)=0$ for some $z_0\in\C^+$. Then $F(z_0)=G(z_0)$, so $F\equiv G$ for any two limits $F$, $G$.
If we had $R(z_1)>0$ for some $z_1\in\C^+$, then we could pick $w_n,w'_n$ so that $\gamma(P_n(z_1)w_n,P_n(z_1)w'_n)\ge c >0$,
but we also just saw that these two sequences must have the same limit if we make them both converge by passing to a subsequence.
This is only possible if in fact $P_{n_j}(z_1)w\to a\in\R_{\infty}$ for all $w\in\C^+$; here, $a$ is independent of $w$.
However, this condition implies that $P_{n_j}(z)w\to a$ for all $z\in\C^+$. In this whole argument, we are free to
choose $n_j$ as a subsequence of the sequence from the hypothesis of the Theorem. So this scenario is not possible, and
thus indeed $R\equiv 0$ as soon as $R(z_0)=0$.

The locally uniform convergence of $R_n$ in this case follows by essentially the same argument.
Fix a sequence $n_j\to\infty$ as in the hypothesis, that is, $P_{n_j}(z_0)w_0\to\zeta\in\C^+$.
Now if we had $R_n(z'_n)\ge 2\epsilon >0$ for arbitrarily large $n$ and certain $z'_n$'s drawn from a compact
set $K\subset\C^+$, then, since $R_n$ is decreasing, we could also arrange that
$R_{n_j}(z_j)\ge 2\epsilon$ for certain arbitrarily large $j$. We could then find $w_j,w'_j\in\C^+$
so that $\gamma(P_{n_j}(z_j)w_j, P_{n_j}(z_j)w'_j)\ge \epsilon>0$,
even though both sequences approach the same limit, uniformly on $K$ (after passing to another subsequence).
This leaves only one possible excuse: the common limit must lie in $\R_{\infty}$, but this contradicts the choice of the sequence $n_j$,
which makes sure that all limits are genuine Herglotz functions.
\end{proof}
Let us now return to Theorem \ref{Trefl}. First of all, the hypothesis of Theorem \ref{TWeyl} is clearly satisfied here:
we know that $P_n(z)m^{(1)}_-(z)\to m_-(z)\in\C^+$.

We wish to show that cases (1), (2) of Theorem \ref{TWeyl} lead
to (1) and (2) of Theorem \ref{Trefl}, respectively.

If we are in case (1) of Theorem \ref{TWeyl}, then the additional statement that $R_n(z)\to 0$ \textit{uniformly }on compact
subsets of $\C^+$ can now take the role of Corollary A.3 of \cite{Remac} (which was used in the original proof).
The argument now proceeds exactly as in \cite{Remac}, with no further modifications necessary. I'll leave the matter at that.

Now consider case (2) (of Theorem \ref{TWeyl}). By the observations made above, a $T\in\mathcal S$ can be written as
either $T=AT_0$ or $T=T_1A$, with $A=T(0)\in SL(2,\R)$ and $T_j\in\mathcal S_0$. This means that in the product $P_n$, we can
move the constant $SL(2,\R)$ factors all the way to the left and write
\[
P_n(z) = B_n S_n(z)\cdots S_1(z) ,
\]
with $B_n\in SL(2,\R)$ and $S_j\in\mathcal S_0$. Observe also that the $S_j$, $1\le j\le n$, don't change if we add new factors on the left and
consider $P_k$ with $k>n$. By Theorem \ref{T4.1}, each $S_j$ is a transfer matrix of a canonical system on an interval $I_j$.
The product $S_n\cdots S_1$ then is the transfer matrix of the concatenation of these canonical systems. More specifically, arrange this so that
$S_j$ is the transfer matrix across $(x_{j-1},x_j)$. I now claim that $\lim_{j\to\infty} x_j<\infty$. This will follow from the important result
of de~Branges \cite[Theorem 42]{dB} that a trace-normed canonical system on a half line is in the limit point case at the singular endpoint.
See also \cite{AchaLPC} for a transparent alternative proof.

So assume, to obtain a contradiction, that $x_j\to\infty$. We shift the intervals so that we now have canonical systems on $[-x_j,0]$, and
we want to consider limits of these problems; these will be canonical systems on a left half line $(-\infty,0]$.
We are taking these limits with respect to a metric that is built in a similar way as the metric
on (Schr\"odinger operator) potentials that was used in \cite{Remcont}. Two canonical systems are close if their coefficient functions
almost agree (in a weak sense, not pointwise) on a sufficiently long initial interval $[-L,0]$; the metric pays little attention to what happens far out.
We will not spell out the details here; the important features
are that this gives a compact space of canonical systems, and convergence with respect to this metric is equivalent to the
locally uniform convergence of the $m$ functions. (In fact, this property could almost serve as the definition.)

Fix any limit point $H_0(x)$, $x<0$. By standard Weyl theory and what we just reviewed, any limiting function of the type
\begin{equation}
\label{4.11}
M(z)=\lim P_{n_j}(z)w_{n_j}
\end{equation}
is a half line $m$ function of this problem. By the result just quoted, there can be only one
such function. However, we are currently assuming that $R(z)>0$ on $\C^+$, and this makes sure
that we can get more than just one function from \eqref{4.11}. Indeed it suffices to fix $z_0\in\C^+$,
then pick $w_n,w'_n$ so that $\gamma(P_n(z_0)w_n,P_n(z_0)w'_n)\ge c>0$ and pass to limits along
a subsequence.

This contradiction shows that the collection of intervals $(x_{j-1},x_j)$ is of bounded total length. So there exist $L\ge 0$ and
a trace-normed $H(x)$ on $(0,L)$ so that $P_n(z)=B_n T(x_n,z)$, where $T(x,z)$ is the transfer matrix of this system across $(0,x)$,
and $0=x_0\le x_1 \le \ldots$, $x_n\to L$. So in particular, we have that $T(x_n,z)\to T(L,z)$ locally uniformly on $\C$,
and this matrix is in $\mathcal S_0$.

It remains to show that with suitable choices of ($n$ dependent) signs, we will also obtain that $\pm B_n\to B$ for some
$B\in SL(2,\R)$. Recall that
\begin{equation}
\label{4.21}
B_nT(x_n,z)m_-^{(1)}(z)\to m_-(z) .
\end{equation}
By normal families again, $B_n$ (as a sequence of linear fractional
transformations) converges locally uniformly on $\C^+$ along suitable subsequences. The limit cannot be identically equal
to an $a\in\R_{\infty}$, by \eqref{4.21}, so it is a linear fractional transformation $B\in SL(2,\R)$ itself. Moreover,
\eqref{4.21} identifies $B$ (as a map) uniquely, so there is no need to pass to a subsequence. For the matrices, this means
that $\pm B_n\to B$, as desired (note that we originally derived the convergence of the \textit{maps, }not the matrices).

\end{document}